\documentclass[a4paper,%
fontsize=11pt,%
oneside,%
numbers=enddot]{scrartcl}
\KOMAoptions{DIV=12}    
\usepackage[utf8]{inputenc}
\usepackage[T1]{fontenc}
\usepackage{graphicx}
\usepackage{textcomp}
\usepackage[fleqn]{amsmath}
\usepackage{amsthm}
\usepackage{amssymb}
\usepackage{amsfonts}
\usepackage{soul}
\usepackage{pifont}        
\usepackage{libertine}
\usepackage[libertine,
slantedGreek,
nosymbolsc,
nonewtxmathopt,
subscriptcorrection]{newtxmath}
\usepackage[scaled=0.95,varqu,varl]{inconsolata}
\usepackage[scr=boondox]{mathalpha}   
\usepackage{euscript}   
\usepackage{leftindex}
\allowdisplaybreaks[1]  
\numberwithin{equation}{section}    
\usepackage[svgnames,hyperref]{xcolor}
\definecolor{orng}{HTML}{F35400}
\definecolor{bleu}{HTML}{BCE6F2}
\definecolor{dblue}{HTML}{0455BF}
\definecolor{dgreen}{HTML}{02724A}
\definecolor{dgreen2}{HTML}{025951}
\definecolor{dred}{HTML}{D90404}
\definecolor{dviolet}{HTML}{42208C}
\definecolor{labelkey}{HTML}{025951}
\definecolor{refkey}{HTML}{025951}
\definecolor{refkey}{rgb}{0,0.6,0.0}
\definecolor{Brown}{rgb}{0.45,0.0,0.05}
\definecolor{dgreen}{rgb}{0.00,0.49,0.00}
\definecolor{dblue}{rgb}{0,0.18,0.75}
\definecolor{lblue}{rgb}{0,0.7,0.75}
\definecolor{dviolet}{HTML}{9400D3}
\definecolor{pblue}{rgb}{0.1176,0.5647,1}
\definecolor{nblue}{rgb}{0.2,0.3,1}
\definecolor{pgreen}{rgb}{0.1961,0.8039,0.1961}
\definecolor{ngreen}{rgb}{0.0,0.6,0.3}
\definecolor{pred}{rgb}{1.0,0.2706,0.0}
\definecolor{magenta}{HTML}{ff00ff}
\definecolor{hotmagenta}{rgb}{1.0, 0.11, 0.81}
\definecolor{dorng}{rgb}{0.91,0.41,0.17}
\definecolor{dgray}{rgb}{0.41,0.41,0.41}
\usepackage{tikz,tkz-euclide,pgfplots}
\usetikzlibrary{arrows,decorations}
\addtokomafont{section}{\centering}

\usepackage{enumitem}
\setlist{itemsep=-2.0pt}

\usepackage{upref}  
\usepackage{hyperref}
\hypersetup{colorlinks=true,
linktocpage=true,
linkcolor=dblue,
citecolor=dgreen,
urlcolor=dred,
pdfencoding=auto,
hypertexnames=false}
\makeatletter
\g@addto@macro\th@plain{
\thm@headfont{\bfseries\sffamily}
\thm@notefont{}}
\g@addto@macro\th@definition{
\thm@headfont{\bfseries\sffamily}
\thm@notefont{}}
\g@addto@macro\th@remark{
\thm@headfont{\bfseries\sffamily}
\thm@notefont{}}
\makeatother
\theoremstyle{plain}
\newtheorem{theorem}{Theorem}[section]

\newtheorem{corollary}[theorem]{Corollary}
\newtheorem{lemma}[theorem]{Lemma}

\theoremstyle{definition}

\newtheorem{example}[theorem]{Example}
\newtheorem{problem}[theorem]{Problem}

\theoremstyle{remark}
\newtheorem{remark}[theorem]{Remark}

\newtheorem{algorithm}[theorem]{Algorithm}

\usepackage{epsfig}
\usepackage[usenames,dvipsnames]{pstricks}
\usepackage{pstricks-add}
\usepackage{pst-grad}
\usepackage{pst-plot} 
\usepackage{pst-node} 
\usepackage{pst-eucl} 
\usepackage{pst-coil} 
\usepackage[extdef=true]{delimset}
\DeclareMathDelimiterSet{\scal}[2]{
\selectdelim[l]<{#1}
\mathpunct{}\selectdelim[p]|
{#2}\selectdelim[r]>}
\DeclareMathDelimiterSet{\EC}[2]{
\mathsf{E}\selectdelim[l]({#1}
\mathpunct{}\selectdelim[p]|
{#2}\selectdelim[r])}

\newcommand{\menge}[2]{\bigl\{{#1}\mid{#2}\bigr\}} 
\DeclareMathDelimiterSet{\Menge}[2]{\selectdelim[l]\{
{#1}\selectdelim[m]|{#2}\selectdelim[r]\}}

\makeatletter
\def\upintkern@{\mkern-7mu\mathchoice{\mkern-3.5mu}{}{}{}}
\def\upintdots@{\mathchoice{\mkern-4mu\@cdots\mkern-4mu}%
{{\cdotp}\mkern1.5mu{\cdotp}\mkern1.5mu{\cdotp}}%
{{\cdotp}\mkern1mu{\cdotp}\mkern1mu{\cdotp}}%
{{\cdotp}\mkern1mu{\cdotp}\mkern1mu{\cdotp}}}
\makeatother
\DeclareFontFamily{OMX}{mdbch}{}
\DeclareFontShape{OMX}{mdbch}{m}{n}{ <->s * [0.8]  mdbchr7v }{}
\DeclareFontShape{OMX}{mdbch}{b}{n}{ <->s * [0.8]  mdbchb7v }{}
\DeclareFontShape{OMX}{mdbch}{bx}{n}{<->ssub * mdbch/b/n}{}
\DeclareSymbolFont{uplargesymbols}{OMX}{mdbch}{m}{n}
\SetSymbolFont{uplargesymbols}{bold}{OMX}{mdbch}{b}{n}
\DeclareMathSymbol{\upintop}{\mathop}{uplargesymbols}{82}
\DeclareMathSymbol{\upointop}{\mathop}{uplargesymbols}{"48}
\makeatletter
\renewcommand{\int}{\DOTSI\upintop\ilimits@}
\renewcommand{\oint}{\DOTSI\upointop\ilimits@}
\makeatother

\newcommand{\RR}{\mathbb{R}}
\newcommand{\NN}{\mathbb{N}}
\newcommand{\XX}{\EuScript{X}}

\newcommand{\CS}{\mathsf{C}}
\newcommand{\QS}{\mathsf{Q}}
\newcommand{\HS}{\mathsf{H}}
\newcommand{\GS}{\mathsf{G}}
\newcommand{\ZS}{\mathsf{Z}}
\newcommand{\zS}{\mathsf{z}}
\newcommand{\nS}{{\mathsf{n}}}
\newcommand{\nnn}{\mathsf{n}\in\mathbb{N}}
\newcommand{\jjj}{\mathsf{j}\in\mathbb{N}}
\newcommand{\iS}{\mathsf{i}}

\newcommand{\jS}{\mathsf{j}}
\newcommand{\kS}{\mathsf{k}}
\newcommand{\KS}{\mathsf{K}}
\newcommand{\xS}{\mathsf{x}}
\newcommand{\yS}{\mathsf{y}}

\newcommand{\TS}{\mathsf{T}}

\newcommand{\BE}{\EuScript{B}}
\newcommand{\FE}{\EuScript{F}}

\newcommand{\pinf}{{+}\infty}
\newcommand{\minf}{{-}\infty}
\newcommand{\zeroun}{\intv[o]{0}{1}}

\newcommand{\RPP}{\intv[o]0{0}{\pinf}}

\newcommand{\emp}{\varnothing}

\newcommand{\WC}{\ensuremath{{\mathfrak W}}}

\newcommand{\Sum}{\displaystyle\sum}

\newcommand{\minimize}[2]{\underset{\substack{{#1}}}
{\operatorname{minimize}}\;\;#2}

\newcommand{\pushfwd}%
{\ensuremath{\mbox{\Large$\,\triangleright\,$}}}

\DeclareMathOperator{\Argmin}{Argmin}
\DeclareMathOperator{\argmin}{argmin}
\newcommand{\Id}{\mathsf{Id}}

\DeclareMathOperator{\Fix}{Fix}

\DeclareMathOperator{\prox}{prox}
\DeclareMathOperator{\proj}{proj}

\newcommand{\EE}{\mathsf{E}}
\newcommand{\PP}{\mathsf{P}}

\renewcommand{\leq}{\leqslant}
\renewcommand{\geq}{\geqslant}
\newcommand{\exi}{\exists\,}
\newcommand{\weakly}{\rightharpoonup}

\newcommand{\Pas}{\text{\normalfont$\PP$-a.s.}}

\renewenvironment{abstract}{%
\vspace*{-0.50cm}
\small
\quotation%
\noindent%
{\normalfont\bfseries\sffamily
\nobreak\abstractname\ }%
}{%
\endquotation%
\medskip
}
\renewcommand{\abstractname}{Abstract.}
\newcommand\keywordsname{Keywords.}
\newenvironment{keywords}
{\renewcommand\abstractname{\keywordsname}\begin{abstract}}
{\end{abstract}}
\usepackage[auth-sc]{authblk}
\newcommand{\email}[1]{\href{mailto:#1}{\nolinkurl{#1}}}
\renewcommand*\Affilfont{\normalfont\normalsize}
\newcommand\affilcr{\protect\\ \protect\Affilfont}
\makeatletter
\renewcommand\AB@affilsepx{\protect\\[0.5em]}
\makeatother

\author[1]{Javier I. Madariaga}
\affil[1]{North Carolina State University
\affilcr
Department of Mathematics
\affilcr
Raleigh, NC 27695, USA
\affilcr
\email{jimadari@ncsu.edu}
}

\begin{document}

\title{An Abstract Stochastic Haugazeau Method\\ 
for Best Approximation\thanks{Contact author:
J. I. Madariaga. Email: \email{jimadari@ncsu.edu}.
This work was supported by the National
Science Foundation under grant DMS-2513409.
}}

\date{~}

\maketitle

\begin{abstract} 
The Haugazeau method was originally designed to compute the best
approximation from an intersection of closed convex sets in Hilbert
spaces using the projection operators onto the individual sets
iteratively. We propose an abstract stochastic version of it to
compute the best approximation from a closed convex set by
successive projections onto randomly generated stochastic outer
approximations of that set. Strong convergence in the mean square
and the almost sure modes is derived under general hypotheses on
the outer approximations. The results are applied to the
development of stochastic algorithms to construct the best 
approximation from an arbitrary intersection of fixed point sets by
random activation of blocks of operators. A numerical application
to the computation of Chebyshev centers is provided.
\end{abstract}

\begin{keywords}
best approximation,
random fixed point algorithm,
randomized block-iterative splitting,
stochastic algorithm.
\end{keywords}

\newpage

\section{Introduction}
\label{sec:1}

Throughout this paper, $\HS$ is a separable real Hilbert space with
identity operator $\Id$, scalar product 
$\scal{\cdot}{\cdot}_{\HS}$, and associated norm $\|\cdot\|_{\HS}$.
$(\upOmega,\FE,\PP)$ is a complete probability space.

In his unpublished thesis, Yves Haugazeau presented a geometric
strategy for finding the projection onto the intersection $\ZS$ of
a finite collection $(\ZS_{\kS})_{1\leq\kS\leq\mathsf{p}}$
of closed convex subsets of $\HS$ by periodic
projections onto these sets individually. To describe it,
let $(\xS,\yS,\zS)\in\HS^3$ and define
\begin{equation}
\label{e:Q1}
\begin{cases}
\HS(\xS,\yS)&=\menge{\zS\in\HS}
{\scal{\zS-\yS}{\xS-\yS}_{\HS}\leq0};\\[2mm]
\mathsf{O}(\xS,\yS,\zS)&=
\begin{cases}
\HS(\xS,\yS)\cap\HS(\yS,\zS),&\text{if}\;\;
\HS(\xS,\yS)\cap\HS(\yS,\zS)\neq\emp;\\
\{\yS\},&\text{if}\;\;\HS(\xS,\yS)\cap\HS(\yS,\zS)=\emp;
\end{cases}\\[2mm]
\QS(\xS,\yS,\zS)&=\proj_{\mathsf{O}(\xS,\yS,\zS)}^{}\xS.
\end{cases}
\end{equation}
The half-space $\HS(\xS,\yS)$ is defined so that the
projection of $\xS$ onto $\HS(\xS,\yS)$ coincides with $\yS$, that
is, $\yS=\proj_{\HS(\xS,\yS)}^{}\xS$. Additionally,
$\mathsf{O}(\xS,\yS,\zS)$ is a
nonempty closed convex subset of $\HS$, which implies that
$\QS(\xS,\yS,\zS)$ is well-defined. Given a starting point
$\xS_{\mathsf{0}}\in\HS$, it is shown in
\cite[Th\'eor\`eme~3-1]{Haug68} that the sequence 
$(\xS_{\nS})_{\nnn}$ generated by the iterative algorithm 
\begin{equation}
\label{e:Hn}
(\forall\nnn)\quad
\xS_{\nS+1}=
\QS\brk1{\xS_{\mathsf{0}},\xS_{\nS},
\proj_{\ZS_{\nS\brk{\mathrm{mod}\,\mathsf{p}}+1}}\xS_{\nS}}
\end{equation}
converges strongly to the best approximation
$\proj_{\ZS}\xS_{\mathsf{0}}$ to $\xS_{\mathsf{0}}$ from $\ZS$. In
this process, $\xS_{\mathsf{0}}$ is projected onto the set
$\mathsf{O}\brk1{\xS_{\mathsf{0}},\xS_{\nS},
\proj_{\ZS_{\nS\brk{\mathrm{mod}\,\mathsf{p}}+1}}\xS_{\nS}}$, 
which is an outer approximation to $\ZS$. These ideas have also 
been used to design parallel projection methods to project
$\xS_{\mathsf{0}}$ onto $\ZS$ \cite{Pier76}. Note that
\begin{equation}
\ZS=\bigcap_{1\leq\kS\leq\mathsf{p}}\ZS_{\kS}\subset
\ZS_{\nS\brk{\mathrm{mod}\,\mathsf{p}}+1}\subset
\HS\brk1{\xS_{\nS},\mathsf{a}_{\nS}},\;\;\text{where}\;\;
\mathsf{a}_{\nS}
=\proj_{\ZS_{\nS\brk{\mathrm{mod}\,\mathsf{p}}+1}}
\xS_{\nS}.
\end{equation}
This observation led to the development in \cite{sicon1} of the
following abstract version of Haugazeau's method to find the best
approximation to $\xS_{\mathsf{0}}\in\HS$ from a nonempty closed
convex subset $\ZS$ of $\HS$. 
\begin{equation}
\label{e:t1-}
\hskip -1mm 
\begin{array}{l}
\textup{for}\;\nS=0,1,\ldots\\
\left\lfloor
\begin{array}{l}
\textup{take}\;\mathsf{a}_{\nS}\in\HS\;
\textup{such that}\;
\ZS\subset\HS(\xS_{\nS},\mathsf{a}_{\nS})\\
\textup{take}\;
\uplambda_{\nS}\in\left]0,1\right]\\
\mathsf{r}_{\nS}=\xS_{\nS}+\uplambda_{\nS}
\brk{\mathsf{a}_{\nS}-\xS_{\nS}}\\
\xS_{\nS+1}=\QS(\xS_{\mathsf{0}},\xS_{\nS},\mathsf{r}_{\nS}).
\end{array}
\right.\\
\end{array}
\end{equation}
This algorithm covers the original Haugazeau method, which is
obtained by setting
$\ZS=\bigcap_{1\leq\kS\leq\mathsf{p}}\ZS_{\kS}$,
and, for every $\nnn$, $\uplambda_{\nS}=1$ and
$\mathsf{a}_{\nS}=
\proj_{\ZS_{\nS\brk{\mathrm{mod}\,\mathsf{p}}+1}}
\xS_{\nS}$. In the general case, the strong convergence of the 
sequence $(\xS_{\nS})_{\nnn}$ in \eqref{e:t1-} to the solution of 
the best approximation problem is guaranteed as long as each weak
cluster point of $(\xS_{\nS})_{\nnn}$ belongs to $\ZS$
\cite{Mor1,sicon1}. Applications of this framework can be found
for instance in
\cite{Nfao15,Sadd22,sicon1,Acnu24,MaPr18,Marq25,Marq26,Solo00}.

In optimization theory, many stochastic methods have been proposed
to handle large-scale problems and high-dimensional settings; see,
e.g., \cite{Siim25,Siop15,Dieu23,Herm19,John24,Luke26} and their
bibliographies for discussions of the modeling and computational
benefits of stochastic methods. However, it remains an open
question whether stochastic versions of \eqref{e:t1-} can be
developed and if so, with which convergence properties. Closely
related to the best approximation problem is the convex feasibility
problem, which consists of finding an arbitrary point in the
intersection of a collection of closed convex sets. Stochastic
methods have been proposed for this problem in which, at each
iteration, only a finite randomly selected subcollection of sets is
activated \cite{Moco25,Herm19,Kost23,Neco21}. This feature is
especially valuable when the collection of sets is uncountably
infinite since, in that setting, deterministic methods cannot
guarantee convergence of the iterates by activating finite blocks
of sets. In this spirit, we propose to study the convergence of a
stochastic counterpart to \eqref{e:t1-} for finding the best
approximation to $x_{\mathsf{0}}\in L^2(\upOmega,\FE,\PP;\HS)$ from
an arbitrary collection of sets. This abstract stochastic Haugazeau
method operates as follows.

\begin{algorithm}
\label{algo:1}
Let $\ZS$ be a nonempty closed convex subset of $\HS$ and let 
$x_{\mathsf{0}}\in~L^2(\upOmega,\FE,\PP;\HS)$.
Iterate
\begin{equation}
\label{e:t1}
\hskip -1mm 
\begin{array}{l}
\textup{for}\;\nS=0,1,\ldots\\
\left\lfloor
\begin{array}{l}
\textup{take}\;a_{\nS}\in L^2(\upOmega,\FE,\PP;\HS)\;
\textup{such that}\;
\ZS\subset \HS(x_{\nS},a_{\nS})\;\Pas\\
\textup{take}\;
\lambda_{\nS}\in L^\infty(\upOmega,\FE,\PP;\left]0,1\right])\\
r_{\nS}=x_{\nS}+\lambda_{\nS}\brk{a_{\nS}-x_{\nS}}\\
x_{\nS+1}=\QS(x_{\mathsf{0}},x_{\nS},r_{\nS}).
\end{array}
\right.\\
\end{array}
\end{equation}
\end{algorithm}

In contrast to the deterministic setting, where the weak-to-strong
convergence principle discovered in \cite{Mor1} enables the
transformation of a broad range of weakly convergent methods in
nonlinear analysis into strongly convergent best approximation
methods \cite{Acnu24}, there is no stochastic weak-to-strong
convergence principle for stochastic methods. This is because they
are constructed over random outer approximations that are not
deterministic and do not act as exact cuts in the sense that $\ZS$
is not almost surely contained in the random outer approximation,
as illustrated in \cite[Figure~1]{Moco25}. Thus, a dedicated
analysis of Algorithm~\ref{algo:1} is required to establish 
convergence guarantees in this stochastic framework.

The paper is organized as follows. In Section~\ref{sec:2}, we
introduce the notation and preliminary results. The study of
Algorithm~\ref{algo:1} is presented in Section~\ref{sec:3}, where
we show the strong convergence of the sequences of iterates almost
surely and in $L^2(\upOmega,\FE,\PP;\HS)$. In Section~\ref{sec:4},
we apply the method to develop a randomly activated block-iterative
algorithm for solving a best approximation problem in which $\ZS$ is
described as the common fixed points of an arbitrary family of
operators, possibly uncountably infinite. In addition, we compare
Haugazeau's original cyclic method with a specialized version of
our method for finding the projection onto the finite intersection
of closed convex sets. Section~\ref{sec:5} concludes the paper with
a numerical application to the computation of Chebyshev centers of
nonempty and bounded sets in $\RR^{\mathsf{N}}$.
 
\section{Notation and background}
\label{sec:2}

\subsection{Notation}
\label{sec:21}

Random variables are denoted by italicized serif letters and
deterministic variables by sans-serif letters.

The symbols $\weakly$ and $\to$ denote weak and strong convergence
in $\HS$, respectively. The set of weak sequential cluster points
of a sequence $(\mathsf{x}_{\nS})_{\nnn}$ in $\HS$ is denoted by
$\WC(\mathsf{x}_{\nS})_{\nnn}$. The projection onto a nonempty
closed convex set $\CS\subset\HS$ is denoted by $\proj_{\CS}$. The
fixed point set of an operator $\TS\colon\HS\to\HS$ is
$\Fix\TS=\menge{\mathsf{x}\in\HS}{\TS\mathsf{x}=\mathsf{x}}$, $\TS$
is firmly quasinonexpansive \cite[Definition~4.1(iv)]{Livre1} if
\begin{equation}
(\forall\xS\in\HS)(\forall\mathsf{y}\in\Fix\TS)\quad
\norm{\TS\xS-\mathsf{y}}_{\HS}^2+\norm{\TS\xS-\xS}_{\HS}^2
\leq\norm{\xS-\mathsf{y}}_{\HS}^2, \end{equation} and $\TS$ is
demiclosed at $\yS\in\HS$ if for every $\xS\in\HS$ and every
sequence $(\xS_{\nS})_{\nnn}$ in $\HS$ such that
$\xS_{\nS}\weakly\xS$ and $\TS\xS_{\nS}\to\yS$, we have
$\TS\xS=\yS$. The reader is referred to \cite{Livre1} for
background on convex analysis and fixed point theory.
 
Let $(\upXi,\EuScript{G})$ be a measurable space. We say that
$x\colon(\upOmega,\FE,\PP)\to(\upXi,\EuScript{G})$ is a
$\upXi$-valued random variable (random variable for short) if it is
measurable. In particular, an $\HS$-valued random variable is a
measurable mapping $x\colon(\upOmega,\FE,\PP)\to(\HS,\BE_{\HS})$,
where $\BE_{\HS}$ denotes the Borel $\upsigma$-algebra of $\HS$.
The sub $\upsigma$-algebra of $\FE$ generated by a family $\upPhi$
of random variables is denoted by $\upsigma(\upPhi)$. Given
$x\colon\upOmega\to\upXi$ and $\mathsf{S}\in\EuScript{G}$, we set
$[x\in\mathsf{S}]=\menge{\upomega\in\upOmega}
{x(\upomega)\in\mathsf{S}}$. Let $x$ and $y$ be $\upXi$-valued
random variables. We say that $y$ is a copy of $x$ if, for every
$\mathsf{S}\in\EuScript{G}$,
$\PP([x\in\mathsf{S}])=\PP([y\in\mathsf{S}])$. Let $\XX$ be a sub
$\upsigma$-algebra of $\FE$. Then $L^2(\upOmega,\XX,\PP;\HS)$
denotes the space of equivalence classes of $\Pas$ equal
$\HS$-valued random variables
$x\colon(\upOmega,\XX,\PP)\to(\HS,\BE_{\HS})$ such that
$\EE\norm{x}_{\HS}^2<\pinf$. Endowed with the scalar product
\begin{equation}
\scal{\cdot}{\cdot}_{L^2(\upOmega,\XX,\PP;\HS)}\colon (x,y)\mapsto
\EE\scal{x}{y}_{\HS}=\int_{\upOmega}
\scal1{x(\upomega)}{y(\upomega)}_{\HS}\PP(d\upomega),
\end{equation} $L^2(\upOmega,\XX,\PP;\HS)$ is a real Hilbert space.
The reader is referred to \cite{Hyto16,Ledo91} for
background on probability in Hilbert spaces. 

\subsection{Preliminary results}
\label{sec:22}

A fundamental question regarding Algorithm~\ref{algo:1} is whether
$x_{\nS+1}$ is well-defined and measurable for every $\nnn$. To
provide an affirmative answer, it is necessary to first investigate
the properties of $\QS$.

\begin{lemma}[{\cite[Definition~2.8]{Livre1}}]
\label{l:Q2}
Set $\upchi=\scal{\mathsf{x-y}}{\mathsf{y-z}}_{\HS}$, 
$\upmu=\|\mathsf{x-y}\|_{\HS}^2$, $\upnu=\|\mathsf{y-z}\|_{\HS}^2$,
and $\uprho=\upmu\upnu-\upchi^2$. Then
\begin{equation}
\hskip -1mm \QS\left(\mathsf{x},\mathsf{y},
\mathsf{z}\right)=
\begin{cases}
\mathsf{y},&\text{if}\;\uprho=0\;\text{and}\;
\upchi< 0;\\[+0mm]
\mathsf{z},&\text{if}\;\uprho=0\;\text{and}\;
\upchi\geq 0;\\[+0mm]
\displaystyle
\mathsf{x}+(1+\upchi/\upnu)(\mathsf{z-y}), 
&\text{if}\;\uprho>0\;\text{and}\;
\upchi\upnu\geq\uprho;\\
\displaystyle \mathsf{y}+(\upnu/\uprho)
\big(\upchi(\mathsf{x-y})+\upmu(\mathsf{z-y})\big), 
&\text{if}\;\uprho>0\;\text{and}\;\upchi\upnu<\uprho.
\end{cases}
\end{equation}
\end{lemma}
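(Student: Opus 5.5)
Write $\HS_1=\HS(\xS,\yS)$, $\HS_2=\HS(\yS,\zS)$ and $\mathsf{p}=\proj_{\mathsf{O}(\xS,\yS,\zS)}\xS$. The plan is to compute $\mathsf{p}$ directly from the variational characterization of the projection. The key observation is that $\yS=\proj_{\HS_1}\xS$ (when $\yS\neq\xS$) and $\zS=\proj_{\HS_2}\yS$ (when $\zS\neq\yS$). The case split in the statement then follows from the Cauchy--Schwarz inequality: $\uprho\geq0$, and $\uprho=0$ exactly when $\xS-\yS$ and $\yS-\zS$ are linearly dependent.

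\emph{Degenerate case $\uprho=0$.} Here $\xS-\yS$ and $\yS-\zS$ are collinear, so each half-space is either all of $\HS$ or is bounded by a hyperplane normal to a common direction $\mathsf{u}$. I will check the subcases directly.

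If $\upchi<0$ (which forces $\upmu,\upnu>0$), then $\yS-\zS=-\alpha(\xS-\yS)$ with $\alpha>0$. The two half-spaces $\menge{\mathsf{w}}{\scal{\mathsf{w}-\yS}{\xS-\yS}_{\HS}\leq0}$ and $\menge{\mathsf{w}}{\scal{\mathsf{w}-\zS}{\xS-\yS}_{\HS}\geq0}$ then have intersection $\emp$, since $\scal{\zS-\yS}{\xS-\yS}_{\HS}=\alpha\upmu>0$. Hence $\mathsf{O}=\{\yS\}$ and $\mathsf{p}=\yS$.

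If $\upchi\geq0$, the sets are nested along the same direction, or one of them is $\HS$. In every such configuration $\HS_1\cap\HS_2\neq\emp$, and one checks $\proj_{\HS_1\cap\HS_2}\xS=\zS$. Concretely, $\HS_1\cap\HS_2=\HS_2$ whenever $\zS\in\HS_1$ (when $\upnu=0$, note that $\HS_2=\HS$ and $\mathsf{p}=\proj_{\HS_1}\xS=\yS=\zS$). Also, $\proj_{\HS_2}\xS=\zS$ because $\xS-\zS$ is a nonnegative multiple of $\yS-\zS$.

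\emph{Nondegenerate case $\uprho>0$.} Here $\HS_1\cap\HS_2\neq\emp$, since two half-spaces with linearly independent normals always intersect, so $\mathsf{O}=\HS_1\cap\HS_2$. For a point $\mathsf{p}$ in this intersection, the optimality condition is
\[
\xS-\mathsf{p}=\upalpha(\xS-\yS)+\upbeta(\yS-\zS),\qquad \upalpha,\upbeta\geq0,
\]
together with complementary slackness. This is the KKT condition for two affine constraints, with the normal cone to an intersection of half-spaces given by the sum of their normal cones.

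First I try $\upalpha=0$, that is, only the constraint $\HS_2$ active. Then $\mathsf{p}=\proj_{\HS_2}\xS$. Since $\scal{\xS-\zS}{\yS-\zS}_{\HS}=\upchi+\upnu$, this gives $\mathsf{p}=\xS-(1+\upchi/\upnu)(\yS-\zS)=\xS+(1+\upchi/\upnu)(\zS-\yS)$. This candidate is valid iff it lies in $\HS_1$, that is, iff $\scal{\mathsf{p}-\yS}{\xS-\yS}_{\HS}\leq0$. Expanding, this reads $\upmu-(1+\upchi/\upnu)\upchi\leq0$, equivalently $\upmu\upnu\leq\upchi\upnu+\upchi^2$, i.e.\ $\uprho\leq\upchi\upnu$. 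This is exactly the third case.

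Otherwise both constraints are active, so $\mathsf{p}$ lies on both hyperplanes. Writing $\mathsf{p}=\yS+s(\xS-\yS)+t(\zS-\yS)$, the two equalities give a $2\times2$ linear system with Gram determinant $\uprho$. Solving it yields $s=\upnu\upchi/\uprho$ and $t=\upmu\upnu/\uprho$. It remains to check $\upalpha,\upbeta\geq0$. The multiplier associated with $\HS_1$ turns out to be proportional to $\uprho-\upchi\upnu>0$ in this case, and the other multiplier is positive. This gives the fourth formula. By uniqueness of the projection, the KKT point found is $\mathsf{p}$.

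The main obstacle is the bookkeeping in the last step: verifying the signs of the multipliers, which is where the threshold $\upchi\upnu$ versus $\uprho$ enters, and making sure the degenerate subcases with $\upmu=0$ or $\upnu=0$ are covered consistently by the first two formulas. Alternatively, one may simply cite \cite[Corollary~29.25]{Livre1}, where this computation appears.
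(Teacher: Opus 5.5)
Your proof is correct. The paper gives no argument for this lemma; it simply imports the formula from \cite{Livre1}, where it is derived from the closed-form projection onto an intersection of two half-spaces. Your KKT case analysis is a self-contained version of that computation. Compared with the citation, it shows where the case split comes from. $\uprho$ is the Gram determinant of $\xS-\yS$ and $\yS-\zS$, so $\uprho=0$ is exactly the collinear situation in which $\HS(\xS,\yS)\cap\HS(\yS,\zS)$ can be empty. The threshold $\upchi\upnu$ versus $\uprho$ is exactly the sign of the multiplier $(\uprho-\upchi\upnu)/\uprho$ attached to $\HS(\xS,\yS)$. The citation buys only brevity.

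Three small points should be tightened.

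First, in the third case the identity $\proj_{\HS_2}\xS=\xS+(1+\upchi/\upnu)(\zS-\yS)$ requires $\upchi+\upnu\geq0$; otherwise $\proj_{\HS_2}\xS=\xS$. This condition holds there because $\upchi\upnu\geq\uprho>0$ forces $\upchi>0$.

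Second, in the degenerate case with $\upchi\geq0$, the assertion that $\HS_1\cap\HS_2=\HS_2$ whenever $\zS\in\HS_1$ is valid only because of collinearity. It is cleaner to note that $\scal{\zS-\yS}{\xS-\yS}_{\HS}=-\upchi\leq0$, so $\zS\in\HS_1\cap\HS_2\subset\HS_2$. Your collinearity argument giving $\zS=\proj_{\HS_2}\xS$ (for $\upnu>0$) then yields $\zS=\proj_{\HS_1\cap\HS_2}\xS$ directly.

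Third, you need neither the normal-cone sum rule nor an argument that both constraints must be active when $\uprho>\upchi\upnu$. The four cases are exhaustive and the projection is unique, so it suffices to check two things for each candidate $\mathsf{p}$:
\begin{itemize}
\item $\mathsf{p}$ lies in $\HS_1\cap\HS_2$;
\item $\xS-\mathsf{p}=\upalpha(\xS-\yS)+\upbeta(\yS-\zS)$ with $\upalpha,\upbeta\geq0$, where each positive multiplier belongs to a constraint active at $\mathsf{p}$.
\end{itemize}
These two facts give $\scal{\mathsf{w}-\mathsf{p}}{\xS-\mathsf{p}}_{\HS}\leq0$ for every $\mathsf{w}\in\HS_1\cap\HS_2$, which is the projection characterization.
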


\begin{lemma}
\label{l:13}
Let $\{x,y,z\}\subset L^2(\upOmega,\FE,\PP;\HS)$.
Suppose that there exists some $u\in L^2(\upOmega,\FE,\PP;\HS)$ 
such that $u\in\mathsf{O}(x,y,z)\;\Pas$
Then $\QS(x,y,z)\in L^2(\upOmega,\FE,\PP;\HS)$. In
particular, $\QS(x,y,z)\in L^2(\upOmega,\FE,\PP;\HS)$ if 
there exists $\mathsf{u}\in\HS$ such that
$\mathsf{u}\in\mathsf{O}(x,y,z)\;\Pas$
\end{lemma}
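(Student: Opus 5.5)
We need two things: that $\QS(x,y,z)$ is measurable, and that it is square integrable.

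\emph{Measurability.} By Lemma~\ref{l:Q2}, $\QS$ is given piecewise by explicit formulas in $(\xS,\yS,\zS)$. The quantities $\upchi,\upmu,\upnu,\uprho$ are continuous functions of $(\xS,\yS,\zS)\in\HS^3$. The four regions
\[
\{\uprho=0,\upchi<0\},\quad\{\uprho=0,\upchi\geq0\},\quad\{\uprho>0,\upchi\upnu\geq\uprho\},\quad\{\uprho>0,\upchi\upnu<\uprho\}
\]
are therefore Borel subsets of $\HS^3$. On each region the corresponding formula is continuous. On the last two regions $\uprho>0$ forces $\upnu>0$, so the divisions are legitimate there. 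Hence $\QS\colon\HS^3\to\HS$ is Borel measurable.

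Since $\HS$ is separable, $\BE_{\HS^3}=\BE_{\HS}^{\otimes3}$. It follows that $(x,y,z)$ is a measurable map from $\upOmega$ to $\HS^3$, and so $\QS(x,y,z)$ is an $\HS$-valued random variable. This composition step, and the separability it relies on, is the only point that needs care.

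\emph{Square integrability.} Fix $\upomega$ outside a null set with $u(\upomega)\in\mathsf{O}(x(\upomega),y(\upomega),z(\upomega))$. Because $\QS(x,y,z)(\upomega)$ is the projection of $x(\upomega)$ onto this nonempty closed convex set, it is the point of the set nearest to $x(\upomega)$. Therefore
\[
\|\QS(x,y,z)(\upomega)-x(\upomega)\|_{\HS}\leq\|u(\upomega)-x(\upomega)\|_{\HS}.
\]
The triangle inequality then gives
\[
\|\QS(x,y,z)\|_{\HS}\leq 2\|x\|_{\HS}+\|u\|_{\HS}\quad\Pas
\]
Squaring and using $(\alpha+\beta)^2\leq 2\alpha^2+2\beta^2$ yields
\[
\EE\|\QS(x,y,z)\|_{\HS}^2\leq 8\EE\|x\|_{\HS}^2+2\EE\|u\|_{\HS}^2<\pinf .
\]
Together with measurability, this shows $\QS(x,y,z)\in L^2(\upOmega,\FE,\PP;\HS)$.

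\emph{The deterministic case.} Take $u$ to be the constant random variable $u\equiv\mathsf{u}$. It belongs to $L^2(\upOmega,\FE,\PP;\HS)$ because $\PP$ is a probability measure, so the "in particular" statement follows from the general one.

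Note that we never use the fact that $\mathsf{O}(x,y,z)$ is always nonempty (it contains $y$). The hypothesis on $u$ serves only to supply an integrable comparison point.
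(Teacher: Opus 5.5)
Your proof is correct and follows essentially the paper's route: measurability comes from the piecewise formula of Lemma~\ref{l:Q2}, and square integrability from $\|\QS(x,y,z)-x\|_{\HS}\leq\|u-x\|_{\HS}$ $\PP$-a.s.; you phrase the first step as composing a Borel map on $\HS^3$ with $(x,y,z)$, which makes explicit the separability the paper uses tacitly. One small correction to your closing aside, which plays no role in the argument: $\mathsf{O}(\xS,\yS,\zS)$ is nonempty by definition but need not contain $\yS$, since $\yS\in\HS(\yS,\zS)$ only if $\yS=\zS$; for example, $\HS=\RR$ and $(\xS,\yS,\zS)=(0,1,2)$ give $\mathsf{O}(\xS,\yS,\zS)=\left[2,\pinf\right[$.
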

\begin{proof}
Set $\chi=\scal{x-y}{y-z}_{\HS}$, 
$\mu=\|x-y\|_{\HS}^2$, $\nu=\|y-z\|_{\HS}^2$, and 
$\rho=\mu\nu-\chi^2$. The continuity of the addition, the scalar 
multiplication, the scalar product on $\HS$, and the norm on $\HS$,
along with the measurability of $x$, $y$, and $z$, assure us that
$\chi$, $\mu$, $\nu$, and $\rho$ are measurable. 
Define the disjoint measurable sets
\begin{equation}
\label{e:111}
\begin{cases}
\mathsf{S}_1=\brk[s]1{\rho=0\;\text{and}\;\chi<0};&\;
\mathsf{S}_2=\brk[s]1{\rho=0\;\text{and}\;\chi\geq0};\\
\mathsf{S}_3=\brk[s]1{\rho>0\;\text{and}\;\chi\nu\geq\rho};&\;
\mathsf{S}_4=\brk[s]1{\rho>0\;\text{and}\;\chi\nu<\rho}.
\end{cases}
\end{equation}
Then it follows from Lemma~\ref{l:Q2} and \eqref{e:111} that we can
write $\QS(x,y,z)$ as
\begin{multline}
\QS(x,y,z)=
\mathsf{1}_{\mathsf{S}_1}y+\mathsf{1}_{\mathsf{S}_2}z
+\mathsf{1}_{\mathsf{S}_3}
\brk2{x+\brk2{1+\frac{\chi}{\nu+\mathsf{1}_{[\nu=0]}}}(z-y)}\\
+\mathsf{1}_{\mathsf{S}_4}
\brk2{y+\brk2{\frac{\nu}{\rho+\mathsf{1}_{[\rho=0]}}
\brk1{\chi(x-y)+\mu(z-y)}}}\;\;\Pas,
\end{multline}
which shows that $\QS(x,y,z)$ is measurable, as 
$(\upOmega,\FE,\PP)$ is complete. On the other hand, we deduce from
\eqref{e:Q1} and the assumptions that
\begin{equation}
\frac{1}{2}\EE\norm1{\QS(x,y,z)-x}_{\HS}^2
=\frac{1}{2}\EE\norm1{\proj_{\mathsf{O}(x,y,z)}^{}x-x}_{\HS}^2
\leq\frac{1}{2}\EE\norm{u-x}_{\HS}^2
\leq\EE\norm{u}_{\HS}^2+\EE\norm{x}_{\HS}^2.
\end{equation}
Hence, since $\{x,u\}\subset L^2(\upOmega,\FE,\PP;\HS)$,
we conclude that $\QS(x,y,z)\in L^2(\upOmega,\FE,\PP;\HS)$.
\end{proof}

\begin{lemma}[{\cite[Lemma~2.8]{Moco25}}]
\label{l:7}
Let $\boldsymbol{x}=(x_1,\ldots,x_{\mathsf{N}})$ be an
$\HS^\mathsf{N}$-valued random variable,
let $(\mathsf{K},\EuScript{K})$ be a measurable space, and suppose
that the random variable
$k\colon(\upOmega,\FE)\to(\mathsf{K},\EuScript{K})$ is
independent of $\upsigma(\boldsymbol{x})$. Let 
$\mathsf{f}\colon(\mathsf{K}\times\HS,\EuScript{K}\otimes\BE_{\HS})
\to\RR$ be measurable and such 
that $\EE\abs{\mathsf{f}(k,x_1)}<\pinf$, and define 
$\mathsf{g}\colon\HS\to\RR\colon\xS\mapsto\EE\mathsf{f}(k,\xS)$.
Then, for $\PP$-almost every $\upomega'\in\upOmega$,
\begin{equation}
\EC1{\mathsf{f}(k,x_1)}{\upsigma(\boldsymbol{x})}(\upomega')
=\int_{\upOmega}\mathsf{f}\brk1{k(\upomega),x_1(\upomega')}
\PP(d\upomega)
=\mathsf{g}\brk1{x_1(\upomega')}.
\end{equation}
\end{lemma}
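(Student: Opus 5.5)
The plan is to reduce the conditional expectation to an integral against the law of $k$ by a monotone class argument. Set $\XX=\upsigma(\boldsymbol{x})$. Since $k$ is independent of $\XX$ and $x_1$ is $\XX$-measurable, the pair $(k,x_1)$ has joint law $\PP_k\otimes\PP_{x_1}$ on $\mathsf{K}\times\HS$. The claim amounts to two things: $\omega'\mapsto\mathsf{g}(x_1(\omega'))$ is a version of $\EC{\mathsf{f}(k,x_1)}{\XX}$, and the middle integral equals $\mathsf{g}(x_1(\omega'))$. The latter is immediate from the definition of $\mathsf{g}$, since $\int_{\upOmega}\mathsf{f}(k(\upomega),\xS)\PP(d\upomega)=\EE\mathsf{f}(k,\xS)$ evaluated at $\xS=x_1(\upomega')$. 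The real content is the first assertion.

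First, treat indicator functions of measurable rectangles, $\mathsf{f}=\mathsf{1}_{\mathsf{A}\times\mathsf{B}}$ with $\mathsf{A}\in\EuScript{K}$ and $\mathsf{B}\in\BE_{\HS}$. Then $\mathsf{f}(k,x_1)=\mathsf{1}_{[k\in\mathsf{A}]}\mathsf{1}_{[x_1\in\mathsf{B}]}$. By independence and the fact that $\mathsf{1}_{[x_1\in\mathsf{B}]}$ is $\XX$-measurable,
\begin{equation*}
\EC1{\mathsf{f}(k,x_1)}{\XX}=\PP([k\in\mathsf{A}])\mathsf{1}_{\mathsf{B}}(x_1)=\mathsf{g}(x_1)\quad\Pas
\end{equation*}
Next, apply Dynkin's $\uppi$--$\uplambda$ theorem. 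The class of sets $\mathsf{C}\in\EuScript{K}\otimes\BE_{\HS}$ for which the identity holds with $\mathsf{f}=\mathsf{1}_{\mathsf{C}}$ is a $\uplambda$-system: it is closed under differences by linearity and under increasing unions by conditional monotone convergence. It contains the $\uppi$-system of rectangles, hence it contains the whole product $\upsigma$-algebra. Along the way, measurability of $\xS\mapsto\EE\mathsf{1}_{\mathsf{C}}(k,\xS)$ should come out of the same argument, essentially Tonelli's theorem on the sections.

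Then extend the identity by linearity to nonnegative simple functions, and by monotone convergence (conditional on one side, ordinary on the other, defining $\mathsf{g}$ in $[0,\pinf]$) to nonnegative measurable $\mathsf{f}$. For general $\mathsf{f}$, split $\mathsf{f}=\mathsf{f}^+-\mathsf{f}^-$.

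I expect the main obstacle to be this last step, namely integrability. The hypothesis $\EE\abs{\mathsf{f}(k,x_1)}<\pinf$ gives $\EE\abs{\mathsf{f}(k,\xS)}<\pinf$ only for $\PP_{x_1}$-almost every $\xS$, not for every $\xS$. The plan is to use the nonnegative case applied to $\abs{\mathsf{f}}$: it shows that $\int\abs{\mathsf{f}(k(\upomega),x_1(\upomega'))}\PP(d\upomega)$ has finite expectation in $\upomega'$, so it is finite for $\PP$-a.e.\ $\upomega'$. On that full-measure set, $\mathsf{g}(x_1(\upomega'))=\mathsf{g}^+(x_1(\upomega'))-\mathsf{g}^-(x_1(\upomega'))$ is well defined and finite. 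Since both conditional expectations of $\mathsf{f}^{\pm}(k,x_1)$ are integrable, subtracting the two nonnegative identities then yields the statement for $\PP$-almost every $\upomega'$, which is exactly the form claimed.
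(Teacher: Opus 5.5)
The paper does not prove this lemma; it cites it from \cite[Lemma~2.8]{Moco25}, so there is no in-paper argument to compare yours with. Your proof is correct and complete, and it is the standard monotone-class proof of the freezing (substitution) lemma. The following steps are all sound:
\begin{itemize}
\item the rectangle computation;
\item the $\pi$--$\lambda$ step, with $\mathsf{K}\times\HS$ itself a rectangle, proper differences handled by linearity, and increasing unions by monotone convergence on both sides;
\item the passage to nonnegative $\mathsf{f}$ through simple functions;
\item the treatment of the fact that $\mathsf{g}$ is finite only $\PP_{x_1}$-a.e., resolved by applying the nonnegative case to $\abs{\mathsf{f}}$.
\end{itemize}
The last point is exactly what gives the statement its meaning, since the statement nominally declares $\mathsf{g}\colon\HS\to\RR$.

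A shorter route uses the product law directly. By independence, $(k,\boldsymbol{x})$ has law $\PP_k\otimes\PP_{\boldsymbol{x}}$. For every Borel set $\mathsf{B}\subset\HS^{\mathsf{N}}$, the function $(\kS,\boldsymbol{\xS})\mapsto\mathsf{1}_{\mathsf{B}}(\boldsymbol{\xS})\mathsf{f}(\kS,\xS_1)$ is integrable for this product measure, and Fubini's theorem gives
\begin{equation*}
\EE\brk1{\mathsf{1}_{[\boldsymbol{x}\in\mathsf{B}]}\mathsf{f}(k,x_1)}
=\EE\brk1{\mathsf{1}_{[\boldsymbol{x}\in\mathsf{B}]}\mathsf{g}(x_1)}.
\end{equation*}
With $\XX=\upsigma(\boldsymbol{x})$ as in your notation, this is exactly the defining property of $\EC{\mathsf{f}(k,x_1)}{\XX}$. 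The same theorem also supplies the measurability of $\mathsf{g}$ and the almost sure finiteness of $\mathsf{g}(x_1)$. Your $\pi$--$\lambda$ argument in effect re-proves this instance of Fubini--Tonelli by hand: it is longer, but entirely self-contained.
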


\section{Convergence analysis}
\label{sec:3}

In this section, we establish the strong convergence of the
sequence $(x_{\nS})_{\nnn}$ generated by Algorithm~\ref{algo:1} to
the solution of the best approximation problem, in both the almost
sure and $L^2(\upOmega,\FE,\PP;\HS)$ modes.

\begin{theorem}
\label{t:1}
Let $(x_{\nS})_{\nnn}$ be the sequence generated by
Algorithm~\ref{algo:1}.
Then the following hold:
\begin{enumerate}
\item
\label{t:1i-}
$(x_{\nS})_{\nnn}$ is a well-defined sequence in
$L^2(\upOmega,\FE,\PP;\HS)$.
\item
\label{t:1i}
$(\forall\nnn)\;\ZS\subset \HS(x_{\mathsf{0}},x_{\nS})\cap 
\HS(x_{\nS},r_{\nS})\;\Pas$
\item
\label{t:1ii}
$(\exi\ell\in L^2(\upOmega,\FE,\PP;\RR))\;
\norm{x_{\nS}-x_{\mathsf{0}}}_{\HS}\uparrow\ell
\leq\norm{\proj_{\ZS}x_{\mathsf{0}}-x_{\mathsf{0}}}_{\HS}\;\Pas$ 
\item
\label{t:1ii-}
$(\norm{x_{\nS}}_{\HS})_{\nnn}$ is bounded $\Pas$
\item
\label{t:1iii}
$\sum_{\nnn}\EE\norm{x_{\nS+1}-x_{\nS}}_{\HS}^2<\pinf$ and
$\sum_{\nnn}\EC{\norm{x_{\nS+1}-x_{\nS}}_{\HS}^2}{\XX_{\nS}}
<\pinf\;\Pas$
\item
\label{t:1iv}
$\sum_{\nnn}\EE\brk{\lambda_{\nS}^2\norm{a_{\nS}-x_{\nS}}_{\HS}^2}
<\pinf$ and 
$\sum_{\nnn}\EC{\lambda_{\nS}^2\norm{a_{\nS}-x_{\nS}}_{\HS}^2}
{\XX_{\nS}}<\pinf\;\Pas$
\item
\label{t:1v}
Suppose that $\mathfrak{W}(x_{\nS})_{\nnn}\subset\ZS\;\Pas$
Then $(x_{\nS})_{\nnn}$ converges strongly $\Pas$ and strongly in
$L^2(\upOmega,\FE,\PP;\HS)$ to $\proj_{\ZS}x_{\mathsf{0}}$.
\end{enumerate}
\end{theorem}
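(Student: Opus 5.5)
We will prove \ref{t:1i-} and \ref{t:1i} together by induction on $\nS$, working pathwise for $\PP$-almost every $\upomega$ and importing the deterministic geometry of \cite{sicon1}. The filtration is $\XX_{\nS}=\upsigma(x_{\mathsf{0}},\ldots,x_{\nS})$.

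\textbf{Steps \ref{t:1i-} and \ref{t:1i}.} Fix $\zS\in\ZS$ and assume $x_{\nS}\in L^2(\upOmega,\FE,\PP;\HS)$ and $\ZS\subset\HS(x_{\mathsf{0}},x_{\nS})$ $\Pas$; the base case $\nS=0$ holds because $\HS(x_{\mathsf{0}},x_{\mathsf{0}})=\HS$.

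1. Since $\ZS\subset\HS(x_{\nS},a_{\nS})$ and $r_{\nS}$ is a convex combination of $x_{\nS}$ and $a_{\nS}$ with $\lambda_{\nS}\in\left]0,1\right]$, for $\zS\in\ZS$ we have
\[
\scal{\zS-r_{\nS}}{x_{\nS}-r_{\nS}}_{\HS}=\lambda_{\nS}\scal{\zS-x_{\nS}}{x_{\nS}-a_{\nS}}_{\HS}+\lambda_{\nS}^2\norm{a_{\nS}-x_{\nS}}_{\HS}^2\leq0 .
\]
Indeed, $\scal{\zS-x_{\nS}}{x_{\nS}-a_{\nS}}_{\HS}=\scal{\zS-a_{\nS}}{x_{\nS}-a_{\nS}}_{\HS}-\norm{x_{\nS}-a_{\nS}}_{\HS}^2$ and $\lambda_{\nS}^2\leq\lambda_{\nS}$. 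Hence $\ZS\subset\HS(x_{\nS},r_{\nS})$ $\Pas$.

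2. Consequently $\ZS\subset\mathsf{O}(x_{\mathsf{0}},x_{\nS},r_{\nS})$, so the deterministic point $\mathsf{u}=\zS$ lies in $\mathsf{O}$ $\Pas$. Lemma~\ref{l:13} then gives $x_{\nS+1}\in L^2(\upOmega,\FE,\PP;\HS)$; the variable $r_{\nS}$ is in $L^2$ because $\lambda_{\nS}\in L^\infty$.

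3. Since $x_{\nS+1}=\proj_{\HS(x_{\mathsf{0}},x_{\nS})\cap\HS(x_{\nS},r_{\nS})}x_{\mathsf{0}}$ and $\ZS$ is contained in this intersection, the projection characterization gives $\ZS\subset\HS(x_{\mathsf{0}},x_{\nS+1})$. This closes the induction.

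\textbf{Steps \ref{t:1ii} and \ref{t:1ii-}.} Since $x_{\nS}=\proj_{\HS(x_{\mathsf{0}},x_{\nS})}x_{\mathsf{0}}$ and $x_{\nS+1}\in\HS(x_{\mathsf{0}},x_{\nS})$, we get
\[
\norm{x_{\nS+1}-x_{\mathsf{0}}}_{\HS}^2\geq\norm{x_{\nS}-x_{\mathsf{0}}}_{\HS}^2+\norm{x_{\nS+1}-x_{\nS}}_{\HS}^2 .
\]
Since $\proj_{\ZS}x_{\mathsf{0}}\in\HS(x_{\mathsf{0}},x_{\nS})$, we also have $\norm{x_{\nS}-x_{\mathsf{0}}}_{\HS}\leq\norm{\proj_{\ZS}x_{\mathsf{0}}-x_{\mathsf{0}}}_{\HS}$. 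So the sequence $\norm{x_{\nS}-x_{\mathsf{0}}}_{\HS}$ increases to a limit $\ell$. The bound $\norm{\proj_{\ZS}x_{\mathsf{0}}-x_{\mathsf{0}}}_{\HS}\leq\norm{\zS-x_{\mathsf{0}}}_{\HS}$ puts it in $L^2$, and the triangle inequality gives \ref{t:1ii-}.

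\textbf{Steps \ref{t:1iii} and \ref{t:1iv}.} Telescoping the displayed inequality gives
\[
\sum_{\nS}\norm{x_{\nS+1}-x_{\nS}}_{\HS}^2\leq\ell^2\leq\norm{\proj_{\ZS}x_{\mathsf{0}}-x_{\mathsf{0}}}_{\HS}^2 .
\]
Taking expectations with monotone convergence yields the first claim of \ref{t:1iii}. For the conditional version, take conditional expectations and use the conditional monotone convergence theorem: $\EE$ of the conditional sum is finite, so the sum is finite $\Pas$.

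For \ref{t:1iv}, recall from \cite{sicon1} that $x_{\nS+1}\in\HS(x_{\nS},r_{\nS})$, and $r_{\nS}$ is the projection of $x_{\nS}$ onto that half-space. Hence
\[
\norm{x_{\nS+1}-x_{\nS}}_{\HS}\geq\norm{r_{\nS}-x_{\nS}}_{\HS}=\lambda_{\nS}\norm{a_{\nS}-x_{\nS}}_{\HS}
\]
pathwise, and \ref{t:1iv} follows from \ref{t:1iii} by comparison.

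\textbf{Step \ref{t:1v}, the main point.} Fix $\upomega$ in the full-measure set where the hypotheses hold. The sequence $(x_{\nS}(\upomega))$ is bounded, and every weak cluster point $\yS$ lies in $\ZS$. Weak lower semicontinuity gives
\[
\norm{\yS-x_{\mathsf{0}}}_{\HS}\leq\ell\leq\norm{\proj_{\ZS}x_{\mathsf{0}}-x_{\mathsf{0}}}_{\HS}.
\]
Since $\yS\in\ZS$, this forces $\yS=\proj_{\ZS}x_{\mathsf{0}}$, so the whole sequence converges weakly to this point. Moreover $\norm{x_{\nS}-x_{\mathsf{0}}}_{\HS}\to\norm{\proj_{\ZS}x_{\mathsf{0}}-x_{\mathsf{0}}}_{\HS}$, so the Kadec--Klee property of $\HS$ upgrades this to strong convergence $\Pas$.

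For $L^2$ convergence, note that $\norm{x_{\nS}-\proj_{\ZS}x_{\mathsf{0}}}_{\HS}\leq 2\norm{\proj_{\ZS}x_{\mathsf{0}}-x_{\mathsf{0}}}_{\HS}$, which is square integrable because $\proj_{\ZS}$ is nonexpansive and $x_{\mathsf{0}}\in L^2$. Dominated convergence then finishes the proof.

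The delicate points are measurability and the exceptional null sets. Countably many null sets, one per $\nS$, must be discarded, and the condition $\ZS\subset\HS(x_{\nS},a_{\nS})$ must hold simultaneously for all $\zS$. This is fine because the condition is stated as holding $\Pas$ as a set inclusion.
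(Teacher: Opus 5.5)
Your proof is correct and follows the paper's argument essentially step for step. The shared steps are the pathwise inclusion $\HS(x_{\nS},a_{\nS})\subset\HS(x_{\nS},r_{\nS})$, Lemma~\ref{l:13} applied with a deterministic point of $\ZS$, the projection characterization giving $\ZS\subset\HS(x_{\mathsf{0}},x_{\nS+1})$, telescoping, the comparison $\norm{r_{\nS}-x_{\nS}}_{\HS}\leq\norm{x_{\nS+1}-x_{\nS}}_{\HS}$, weak lower semicontinuity with the Kadec--Klee property, and dominated convergence. The one deviation is in the claim about $\ell$: you get the limit directly from almost sure monotonicity and the square-integrable bound $\norm{\zS-x_{\mathsf{0}}}_{\HS}$, whereas the paper uses Doob's submartingale convergence theorem \cite{Doob53}; your route is a legitimate and simpler shortcut, since the statement only asks for almost sure monotone convergence to some $\ell\in L^2(\upOmega,\FE,\PP;\RR)$.
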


\begin{proof}
\ref{t:1i-} and \ref{t:1i}: Suppose that, for some $\nnn$, 
$x_{\nS}\in L^2(\upOmega,\FE,\PP;\HS)$ and
$\ZS\subset\HS(x_{\mathsf{0}},x_{\nS})\;\Pas$ It is 
clear from \eqref{e:t1} that $r_{\nS}\in 
L^2(\upOmega,\FE,\PP;\HS)$. On the other hand, for $\PP$-almost 
every $\upomega\in\upOmega$,
\begin{align}
&\HS\brk1{(x_{\nS}(\upomega),a_{\nS}(\upomega)}\nonumber\\
&\qquad=\Menge1{\zS\in\HS}
{\scal{\zS-a_{\nS}(\upomega)}
{x_{\nS}(\upomega)-a_{\nS}(\upomega)}_{\HS}
\leq 0}\nonumber\\
&\qquad=\Menge1{\zS\in\HS}
{\scal{\zS-a_{\nS}(\upomega)}
{x_{\nS}(\upomega)-x_{\nS}(\upomega)
-\lambda_{\nS}\brk{a_{\nS}(\upomega)-x_{\nS}(\upomega)}}_{\HS}
\leq0}\nonumber\\
&\qquad=\Menge1{\zS\in\HS}
{\scal{\zS-r_{\nS}(\upomega)}
{x_{\nS}(\upomega)-r_{\nS}(\upomega)}_{\HS}
\leq\scal{a_{\nS}(\upomega)-r_{\nS}(\upomega)}
{x_{\nS}(\upomega)-r_{\nS}(\upomega)}_{\HS}}\nonumber\\
&\qquad=\Menge1{\zS\in\HS}
{\scal{\zS-r_{\nS}(\upomega)}
{x_{\nS}(\upomega)-r_{\nS}(\upomega)}_{\HS}
\leq-\lambda_{\nS}\brk{1-\lambda_{\nS}}
\norm{x_{\nS}(\upomega)-a_{\nS}(\upomega)}_{\HS}^2}\nonumber\\
&\qquad\subset\HS\brk1{x_{\nS}(\upomega),r_{\nS}(\upomega)}.
\label{e:t12}
\end{align}
We deduce from \eqref{e:t1} and \eqref{e:t12} that
$\ZS\subset\HS(x_{\nS},r_{\nS})\;\Pas$ Hence
there exists a subset $\upOmega_{\nS}'\in\FE$ such that 
$\PP(\upOmega_{\nS}')=1$ and
$(\forall\upomega\in\upOmega_{\nS}')\;\;
\ZS\subset\HS(x_{\nS}(\upomega),r_{\nS}(\upomega))$. 
Likewise, there exists $\upOmega_{\nS}''\in\FE$ such that 
$\PP(\upOmega_{\nS}')=1$ and
$(\forall\upomega\in\upOmega_{\nS}'')\;
\ZS\subset\HS(x_{\mathsf{0}}(\upomega),x_{\nS}(\upomega))$. Let
$\upomega\in\upOmega_{\nS}'\cap\upOmega_{\nS}''$. It follows from 
\cite[Theorem~3.16]{Livre1} and \eqref{e:t1} that
\begin{align}
\ZS\subset\HS\brk1{x_{\mathsf{0}}(\upomega),x_{\nS}(\upomega)}\cap
\HS\brk1{x_{\nS}(\upomega),r_{\nS}(\upomega)}\;\Rightarrow\;
&\ZS\subset\HS\brk1{x_{\mathsf{0}}(\upomega),
\QS
\brk{x_{\mathsf{0}}(\upomega),x_{\nS}(\upomega),r_{\nS}(\upomega)}
}\nonumber\\
\;\Leftrightarrow\;
&\ZS\subset\HS\brk{x_{\mathsf{0}}(\upomega),x_{\nS+1}(\upomega)}.
\label{e:712}
\end{align}
Since $\PP(\upOmega_{\nS}'\cap\upOmega_{\nS}'')=1$, we get 
$\emp\neq\ZS\subset\HS(x_{\mathsf{0}},x_{\nS+1})\;\Pas$ Hence, 
Lemma~\ref{l:13} yields $x_{\nS+1}\in 
L^2(\upOmega,\FE,\PP;\HS)$. Since $x_{\mathsf{0}}\in 
L^2(\upOmega,\FE,\PP;\HS)$ and 
$\ZS\subset\HS(x_{\mathsf{0}},x_{\mathsf{0}})=\HS\;\Pas$,
we conclude by an inductive argument that $(x_{\nS})_{\nnn}$ and
$(r_{\nS})_{\nnn}$ are sequences in $L^2(\upOmega,\FE,\PP;\HS)$, 
and, for every $\nnn$, $\ZS\subset \HS(x_{\mathsf{0}},x_{\nS})\cap 
\HS(x_{\nS},r_{\nS})\;\Pas$

\ref{t:1ii}: Set, for every $\nnn$,
$\XX_{\nS}=\upsigma(x_{\mathsf{0}},\ldots,x_{\nS})$, and consider 
the random process 
$(\norm{x_{\nS}-x_{\mathsf{0}}}_{\HS},\XX_{\nS})_{\nnn}$. 
Let us show that this process is a submartingale. Let $\nnn$. It
follows from \ref{t:1i} that
\begin{equation}
\label{e:112}
\mathsf{O}(x_{\mathsf{0}},x_{\nS},r_{\nS})
=\HS(x_{\mathsf{0}},x_{\nS})\cap\HS(x_{\nS},r_{\nS})\;\Pas\;\;
\text{and}\;\;x_{\nS+1}\in
\HS(x_{\mathsf{0}},x_{\nS})\cap\HS(x_{\nS},r_{\nS})\;\Pas
\end{equation}
We note from \eqref{e:Q1} that
$x_{\nS}=\proj_{\HS(x_{\mathsf{0}},x_{\nS})}^{}x_{\mathsf{0}}\;
\Pas$, and from \eqref{e:112} that 
$x_{\nS+1}\in\HS(x_{\mathsf{0}},x_{\nS})\;\Pas$ Then 
\begin{equation}
(\forall\nnn)\quad \norm{x_{\nS}-x_{\mathsf{0}}}_{\HS} 
\leq\norm{x_{\nS+1}-x_{\mathsf{0}}}_{\HS}\quad\Pas
\end{equation}
Hence
\begin{equation}
(\forall\nnn)\quad\norm{x_{\nS}-x_{\mathsf{0}}}_{\HS}=
\EC1{\norm{x_{\nS}-x_{\mathsf{0}}}_{\HS}}{\XX_{\nS}}
\leq\EC1{\norm{x_{\nS+1}-x_{\mathsf{0}}}_{\HS}}{\XX_{\nS}}\quad\Pas
\end{equation}
Therefore
$(\norm{x_{\nS}-x_{\mathsf{0}}}_{\HS},\XX_{\nS})_{\nnn}$ is a 
positive submartingale. On the other hand, for every $\nnn$,
$\proj_{\ZS}x_{\mathsf{0}}\in
\ZS\subset\HS(x_{\mathsf{0}},x_{\nS})\;\Pas$ 
Then, for every $\nnn$, $\norm{x_{\nS}-x_{\mathsf{0}}}_{\HS}\leq
\norm{\proj_{\ZS}x_{\mathsf{0}}-x_{\mathsf{0}}}_{\HS}\;\Pas$, which
shows that
\begin{equation}
(\forall\nnn)\quad\EE\norm{x_{\nS}-x_{\mathsf{0}}}_{\HS}^2
\leq\EE\norm{\proj_{\ZS}x_{\mathsf{0}}-x_{\mathsf{0}}}_{\HS}^2<
\pinf.
\end{equation}
Consequently, 
$\sup_{\nnn}\EE\norm{x_{\nS}-x_{\mathsf{0}}}_{\HS}^2<\pinf$ and we
deduce from \cite[\S IV~Theorems~4.1s(i) and 4.1s(iii)]{Doob53}
that $(\norm{x_{\nS}-x_{\mathsf{0}}}_{\HS})_{\nnn}$ converges 
$\Pas$ and converges in $L^2(\upOmega,\FE,\PP;\RR)$ to a random 
variable $\ell\in L^2(\upOmega,\FE,\PP;\RR)$ which satisfies 
$\ell\leq
\norm{\proj_{\ZS}x_{\mathsf{0}}-x_{\mathsf{0}}}_{\HS}\;\Pas$ 

\ref{t:1ii-}: Note that, for every $\nnn$,
\begin{equation}
\label{e:boun}
\norm{x_{\nS}}_{\HS}
\leq\norm{x_{\nS}-x_{\mathsf{0}}}_{\HS}+\norm{x_{\mathsf{0}}}_{\HS}
\;\;\Pas
\end{equation}
We deduce from \eqref{e:boun} and \ref{t:1ii} that
$(\norm{x_{\nS}}_{\HS})_{\nnn}$ is bounded $\Pas$

\ref{t:1iii}: Let $\nnn$. Since 
$x_{\nS+1}\in\HS(x_{\mathsf{0}},x_{\nS})\;\Pas$, we have 
\begin{align}
\norm{x_{\nS+1}-x_{\nS}}_{\HS}^2 
&\leq\norm{x_{\nS+1}-x_{\nS}}_{\HS}^2 
+2\scal{x_{\nS+1}-x_{\nS}}{x_{\nS}-x_{\mathsf{0}}}_{\HS}\nonumber\\
&=\norm{x_{\nS+1}-x_{\mathsf{0}}}_{\HS}^2
-\norm{x_{\nS}-x_{\mathsf{0}}}_{\HS}^2\;\;\Pas
\end{align}
Hence, $\sum_{\jS=0}^{\nS}\norm{x_{\jS+1}-x_{\jS}}_{\HS}^2
\leq\norm{x_{\nS+1}-x_{\mathsf{0}}}_{\HS}^2
\leq\norm{\proj_{\ZS}x_{\mathsf{0}}-x_{\mathsf{0}}}_{\HS}^2\;\Pas$ 
Therefore, by taking expected value and limit $\nS\to\pinf$, we 
have 
\begin{equation}
\sum_{\jjj}
\EE\brk2{\EC1{\norm{x_{\jS+1}-x_{\jS}}_{\HS}^2}{\XX_{\jS}}}
=\sum_{\jjj}\EE\norm{x_{\jS+1}-x_{\jS}}_{\HS}^2
\leq\EE\norm{\proj_{\ZS}x_{\mathsf{0}}-x_{\mathsf{0}}}_{\HS}^2
<\pinf.
\end{equation}
We conclude that 
$\sum_{\jjj}\EE\norm{x_{\jS+1}-x_{\jS}}_{\HS}^2<\pinf$ and that
$\sum_{\jjj}\EC{\norm{x_{\jS+1}-x_{\jS}}_{\HS}^2}{\XX_{\jS}}
<\pinf\;\Pas$

\ref{t:1iv}: Let $\nnn$. Since
$x_{\nS+1}\in\HS(x_{\nS},r_{\nS})\;\Pas$, we have
\begin{align}
\norm{r_{\nS}-x_{\nS}}_{\HS}^2 
&\leq\norm{x_{\nS+1}-r_{\nS}}_{\HS}^2
+\norm{x_{\nS}-r_{\nS}}_{\HS}^2\nonumber\\
&\leq\norm{x_{\nS+1}-r_{\nS}}_{\HS}^2
+2\scal{x_{\nS+1}-r_{\nS}}{r_{\nS}-x_{\nS}}_{\HS} 
+\norm{x_{\nS}-r_{\nS}}_{\HS}^2\nonumber\\
&=\norm{x_{\nS+1}-x_{\nS}}_{\HS}^2\;\;\Pas
\end{align}
It follows then from \ref{t:1iii} and \eqref{e:t1} that
\begin{equation}
\sum_{\jjj}\EE\brk2{
\EC{\lambda_{\jS}^2\norm{a_{\jS}-x_{\jS}}_{\HS}^2}{\XX_{\jS}}}=
\sum_{\jjj}\EE\brk1{\lambda_{\jS}^2\norm{a_{\jS}-x_{\jS}}_{\HS}^2}=
\sum_{\jjj}\EE\norm{r_{\jS}-x_{\jS}}_{\HS}^2<\pinf.
\end{equation}
Hence, $\sum_{\jjj}
\EE\brk{\lambda_{\jS}^2\norm{a_{\jS}-x_{\jS}}_{\HS}^2}<\pinf$ and
$\sum_{\jjj}
\EC{\lambda_{\jS}^2\norm{a_{\jS}-x_{\jS}}_{\HS}^2}{\XX_{\jS}}
<\pinf\;\Pas$

\ref{t:1v}: It follows from \ref{t:1ii-} that
$\mathfrak{W}(x_{\nS})_{\nnn}\neq\emp\;\Pas$ Now suppose that
$\mathfrak{W}(x_{\nS})_{\nnn}\subset\ZS\;\Pas$ Then there exists
$\upOmega'\in\FE$ such that 
$\PP(\upOmega')=1$ and $(\forall\upomega\in\upOmega')\;
\emp\neq\mathfrak{W}(x_{\nS}(\upomega))_{\nnn}\subset\ZS$. In 
addition, consider from \ref{t:1ii} the event $\upOmega''\in\FE$ 
such that $\PP(\upOmega'')=1$ and
\begin{equation}
\label{e:ell1}
(\forall\upomega\in\upOmega'')
\quad
\begin{cases}
\norm{x_{\nS}(\upomega)-x_{\mathsf{0}}(\upomega)}_{\HS}\to
\ell(\upomega);\\[1mm]
\ell(\upomega)\leq
\norm{\proj_{\ZS}x_{\mathsf{0}}(\upomega)
-x_{\mathsf{0}}(\upomega)}_{\HS}.
\end{cases}
\end{equation}
Let $\upomega\in\upOmega'\cap\upOmega''$ and let $\xS\in\ZS$ be 
a weak sequential cluster point of $(x_{\nS}(\upomega))_{\nnn}$, 
say $x_{\kS_{\nS}}(\upomega)\weakly\xS$. Then, the weak 
lower semicontinuity of $\|\cdot\|_{\HS}$ and \eqref{e:ell1}
imply that
\begin{align}
\norm{\xS-x_{\mathsf{0}}(\upomega)}_{\HS}
&\leq\varliminf_{\nnn}
\norm{x_{\kS_{\nS}}(\upomega)-x_{\mathsf{0}}(\upomega)}_{\HS}
\nonumber\\
&=\ell(\upomega)\nonumber\\
&\leq\norm{\proj_{\ZS}x_{\mathsf{0}}(\upomega)
-x_{\mathsf{0}}(\upomega)}_{\HS}\nonumber\\
&=\inf_{\yS\in\ZS}\norm{\yS-x_{\mathsf{0}}(\upomega)}_{\HS}
\nonumber\\
&\leq\norm{\xS-x_{\mathsf{0}}(\upomega)}_{\HS}.
\label{e:t6}
\end{align}
Therefore $\xS=\proj_{\ZS}x_{\mathsf{0}}(\upomega)$ is the only
weak sequential cluster point of the sequence 
$(x_{\nS}(\upomega))_{\nnn}$ and 
$x_{\nS}(\upomega)\weakly\proj_{\ZS}x_{\mathsf{0}}(\upomega)$. 
Hence, $x_{\nS}(\upomega)-x_{\mathsf{0}}(\upomega)\weakly
\proj_{\ZS}x_{\mathsf{0}}(\upomega)-x_{\mathsf{0}}(\upomega)$ and 
\eqref{e:t6} yields
$\norm{x_{\nS}(\upomega)-x_{\mathsf{0}}(\upomega)}_{\HS}\to
\norm{\proj_{\ZS}x_{\mathsf{0}}(\upomega)
-x_{\mathsf{0}}(\upomega)}_{\HS}$. Then, by 
\cite[Lemma~2.41(i)]{Livre1}, we get that
$x_{\nS}(\upomega)-x_{\mathsf{0}}(\upomega)\to
\proj_{\ZS}x_{\mathsf{0}}(\upomega)-x_{\mathsf{0}}(\upomega)$ and, 
therefore,
$x_{\nS}(\upomega)\to\proj_{\ZS}x_{\mathsf{0}}(\upomega)$. Since 
$\PP(\upOmega'\cap\upOmega'')=1$, we deduce that 
$x_{\nS}\to\proj_{\ZS}x_{\mathsf{0}}\;\Pas$ Furthermore, note from
\ref{t:1ii} that, for every $\nnn$,
\begin{align}
\norm{\proj_{\ZS}x_{\mathsf{0}}-x_{\nS}}_{\HS}
&\leq\norm{\proj_{\ZS}x_{\mathsf{0}}-x_{\mathsf{0}}}_{\HS}
+\norm{x_{\nS}-x_{\mathsf{0}}}_{\HS}\nonumber\\
&\leq2\norm{\proj_{\ZS}x_{\mathsf{0}}-x_{\mathsf{0}}}_{\HS}\;\;\Pas
\end{align}
Since $ \norm{\proj_{\ZS}x_{\mathsf{0}}-x_{\mathsf{0}}}_{\HS}\in
L^2(\upOmega,\FE,\PP;\RR)$, the dominated convergence theorem
guarantees that $(x_{\nS})_{\nnn}$ converges strongly in
$L^2(\upOmega,\FE,\PP;\HS)$ to $\proj_{\ZS}x_{\mathsf{0}}$.
\end{proof}

\section{Application to common fixed point best approximation}
\label{sec:4}

In this section we specialize the best approximation problem to the
following setting.

\begin{problem}
\label{prob:1}
Let $(\mathsf{K},\EuScript{K})$ be a measurable space and
let $(\TS_{\kS})_{\kS\in\mathsf{K}}$ be a family of firmly
quasinonexpansive operators such that 
$\boldsymbol{\TS}\colon
(\mathsf{K}\times\HS,\EuScript{K}\otimes\BE_{\HS})
\to(\HS,\BE_{\HS})\colon(\kS,\xS)
\mapsto\TS_{\kS}\xS$ is measurable and, for every 
$\kS\in\mathsf{K}$, $\Id-\TS_{\kS}$ is demiclosed at
$\mathsf{0}$. Let $k$ be a $\mathsf{K}$-valued random variable,
suppose that
\begin{equation}
\label{e:Z}
\ZS=\Menge1{\mathsf{z}\in\HS}{\mathsf{z}\in\Fix\TS_{k}\;\Pas}
\neq\emp,
\end{equation} 
and let $x_{\mathsf{0}}\in L^2(\upOmega,\FE,\PP;\HS)$. The goal is 
to find $\proj_{\ZS}x_{\mathsf{0}}$.
\end{problem}

To emphasize the versatility of Problem~\ref{prob:1}, 
we present some examples of firmly quasinonexpansive operators
commonly used in nonlinear analysis and optimization methods. 

\begin{example}[{\cite[Proposition~2.3]{Mor1}}]
\label{ex:51}
Let $\TS\colon\HS\to\HS$. Then $\TS$ is firmly quasinonexpansive
if one of the following holds:
\begin{enumerate}
\item
\label{ex:51i}
$\ZS$ is a nonempty closed convex subset of $\HS$ and
$\TS=\proj_{\ZS}$. Here, $\Fix\TS=\ZS$.
\item
\label{ex:51ii}
$\mathsf{f}\colon\HS\to\left]\minf,\pinf\right]$ is a proper
lower semicontinuous convex function and
\begin{equation}
\TS=\prox_{\mathsf{f}}\colon\HS\to\HS\colon\xS\mapsto
\underset{\mathsf{y}\in\HS}{\argmin}
\brk2{\mathsf{f}(\mathsf{y})+
\frac{1}{2}\norm{\xS-\mathsf{y}}_{\HS}^2}.
\end{equation}
Here, $\Fix\TS=\Argmin\mathsf{f}$.
\item
\label{ex:51iii}
$\mathsf{A}\colon\HS\to 2^{\HS}$ is maximally monotone and
$\TS=\mathsf{J}_{\mathsf{A}}=(\Id+\mathsf{A})^{-1}$.
Here, $\Fix\TS=\menge{\zS\in\HS}{\mathsf{0}\in\mathsf{A}\zS}$.
\item
\label{ex:51iv}
$\mathsf{f}\colon\HS\to\RR$ is a continuous convex function, 
$\mathsf{s}\colon\HS\to\HS\colon\xS\mapsto\mathsf{s}(\xS)\in
\partial\mathsf{f}(\xS)$ is a selection of $\partial\mathsf{f}$, 
and
\begin{equation}
\nonumber
\TS=\GS_{\mathsf{f}}\colon\HS\to\HS\colon\mathsf{x}\mapsto
\begin{cases}
\xS-\dfrac{\mathsf{f}(\xS)}{\norm{\mathsf{s}(\xS)}_{\HS}^2}
\mathsf{s}(\xS),&\text{if}\;\;\mathsf{f}(\xS)>0;\\
\xS,&\text{if}\;\;\mathsf{f}(\xS)\leq 0,
\end{cases}
\end{equation}
is the subgradient projector onto
$\Fix\TS=\menge{\xS\in\HS}{\mathsf{f}(\xS)\leq0}$.
\end{enumerate} 
\end{example}

\subsection{Algorithm and convergence}
\label{sec:41}

Since the set $\ZS$ of Problem~\ref{prob:1} is a nonempty closed
convex subset of $\HS$ \cite[Remark~5.3]{Moco25}, we invoke
Theorem~\ref{t:1} to introduce the first randomly selected
block-iterative fixed point algorithm to solve
Problem~\ref{prob:1}, which guarantees both strong almost
sure convergence and strong convergence in
$L^2(\upOmega,\FE,\PP;\HS)$. We emphasize that this method is novel
even for $\mathsf{K}$ finite or countably infinite. Furthermore,
unlike the deterministic algorithms, this stochastic method is able
to solve Problem~\ref{prob:1} in its generality by random
activation of block of operators.

\begin{theorem}
\label{t:2}
In the setting of Problem~\ref{prob:1}, let $0<\mathsf{M}\in\NN$,
$\updelta\in\left]0,1/\mathsf{M}\right[$, and
$\upvarepsilon\in\zeroun$. Iterate
\begin{equation}
\label{e:p2}
\begin{array}{l}
\textup{for}\;\nS=0,1,\ldots\\
\left\lfloor
\begin{array}{l}
\XX_{\nS}=\upsigma(x_{\mathsf{0}},\dots,x_{\nS})\\
\begin{array}{l}
\text{for}\;\iS=1,\dots,\mathsf{M}\\
\left\lfloor
\begin{array}{l}
k_{\iS,\nS}\;\textup{is a copy of $k$ and is independent of}\;
\XX_{\nS}\\
p_{\iS,\nS}=\TS_{k_{\iS,\nS}}x_{\nS}\\[1mm]
\end{array}
\right.\\
\end{array}\\[5mm]
(\beta_{\iS,\nS})_{1\leq\iS\leq\mathsf{M}}\;\textup{are}\;
\left[0,1\right]\textup{-valued random variables such that}\\
\qquad\sum_{\iS=1}^\mathsf{M}\beta_{\iS,\nS}=1\;\Pas
\;\textup{and}\;\;
(\forall\iS\in\{1,\dots,\mathsf{M}\})\;
\beta_{\iS,\nS}\geq\updelta
\mathsf{1}_{\bigl[\norm{p_{\iS,\nS}-x_{\nS}}_{\HS}=
\max\limits_{1\leq\jS\leq\mathsf{M}}
\norm{p_{\jS,\nS}-x_{\nS}}_{\HS}\bigr]}\;\\
p_{\nS}=\sum_{\iS=1}^\mathsf{M}\beta_{\iS,\nS}p_{\iS,\nS}\\[1mm]
L_{\nS}=\dfrac{{\sum_{\iS=1}^{\mathsf{M}}\beta_{\iS,\nS}
\norm{p_{\iS,\nS}-x_{\nS}}_{\HS}^2}+
\mathsf{1}_{\bigl[p_{\nS}=x_{\nS}\bigr]}}
{\norm{p_{\nS}-x_{\nS}}_{\HS}^2+
\mathsf{1}_{\bigl[p_{\nS}=x_{\nS}\bigr]}}\\[5mm]
a_{\nS}=x_{\nS}+L_{\nS}(p_{\nS}-x_{\nS})\\
\textup{take}\;\lambda_{\nS}\in 
L^\infty(\upOmega,\FE,\PP;\left[\upvarepsilon,1\right])\\
r_{\nS}=x_{\nS}+\lambda_{\nS}\brk{a_{\nS}-x_{\nS}}\\
x_{\nS+1}=\QS(x_{\mathsf{0}},x_{\nS},r_{\nS}).
\end{array}
\right.\\
\end{array}
\end{equation}
Then $(x_{\nS})_{\nnn}$ converges strongly $\Pas$ and strongly in 
$L^2(\upOmega,\FE,\PP;\HS)$ to $\proj_{\ZS}x_{\mathsf{0}}$.
\end{theorem}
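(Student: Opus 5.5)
The plan is to realize \eqref{e:p2} as an instance of Algorithm~\ref{algo:1} and then invoke Theorem~\ref{t:1}\ref{t:1v}. This requires three things. First, $a_{\nS}\in L^2(\upOmega,\FE,\PP;\HS)$ with $\ZS\subset\HS(x_{\nS},a_{\nS})$ $\Pas$ Second, $\lambda_{\nS}$ is admissible, which is clear since $[\upvarepsilon,1]\subset\left]0,1\right]$. Third, $\WC(x_{\nS})_{\nnn}\subset\ZS$ $\Pas$

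Measurability of $p_{\iS,\nS}$ follows from the joint measurability of $\boldsymbol{\TS}$. For the cut property, fix $\zS\in\ZS$. Then $\zS\in\Fix\TS_{k_{\iS,\nS}}$ $\Pas$, because $k_{\iS,\nS}$ is a copy of $k$. Firm quasinonexpansiveness gives $\scal{\zS-p_{\iS,\nS}}{x_{\nS}-p_{\iS,\nS}}_{\HS}\leq0$, that is,
\begin{equation*}
\scal{\zS-x_{\nS}}{p_{\iS,\nS}-x_{\nS}}_{\HS}\geq\norm{p_{\iS,\nS}-x_{\nS}}_{\HS}^2 .
\end{equation*}
Averaging with the weights $\beta_{\iS,\nS}$ yields $\scal{\zS-x_{\nS}}{p_{\nS}-x_{\nS}}_{\HS}\geq\sum_{\iS}\beta_{\iS,\nS}\norm{p_{\iS,\nS}-x_{\nS}}_{\HS}^2=L_{\nS}\norm{p_{\nS}-x_{\nS}}_{\HS}^2$. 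Substituting $a_{\nS}-x_{\nS}=L_{\nS}(p_{\nS}-x_{\nS})$ gives $\scal{\zS-a_{\nS}}{x_{\nS}-a_{\nS}}_{\HS}\leq0$; when $p_{\nS}=x_{\nS}$ all $p_{\iS,\nS}=x_{\nS}$ and the inequality is trivial. Separability of $\HS$ lets us take a countable dense subset of $\ZS$, so the null sets can be chosen uniformly and $\ZS\subset\HS(x_{\nS},a_{\nS})$ $\Pas$ For integrability, the same inequality with $\zS\in\ZS$ gives $L_{\nS}\norm{p_{\nS}-x_{\nS}}_{\HS}\leq\norm{\zS-x_{\nS}}_{\HS}$, so $\norm{a_{\nS}-x_{\nS}}_{\HS}\leq\norm{\zS-x_{\nS}}_{\HS}$ and $a_{\nS}\in L^2$.

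Theorem~\ref{t:1}\ref{t:1iv} together with $\lambda_{\nS}\geq\upvarepsilon$ gives $\sum_{\nS}\EC{\norm{a_{\nS}-x_{\nS}}_{\HS}^2}{\XX_{\nS}}<\pinf$ $\Pas$ Jensen's inequality gives $L_{\nS}\geq1$, hence
\begin{equation*}
\norm{a_{\nS}-x_{\nS}}_{\HS}^2=L_{\nS}^2\norm{p_{\nS}-x_{\nS}}_{\HS}^2\geq L_{\nS}\norm{p_{\nS}-x_{\nS}}_{\HS}^2=\sum_{\iS}\beta_{\iS,\nS}\norm{p_{\iS,\nS}-x_{\nS}}_{\HS}^2 .
\end{equation*}
The right-hand side is at least $\updelta\max_{\iS}\norm{p_{\iS,\nS}-x_{\nS}}_{\HS}^2\geq\updelta\norm{p_{1,\nS}-x_{\nS}}_{\HS}^2$, because the maximizing index carries weight at least $\updelta$. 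Since $k_{1,\nS}$ is independent of $\XX_{\nS}$ and is a copy of $k$, Lemma~\ref{l:7} gives
\begin{equation*}
\EC{\norm{\TS_{k_{1,\nS}}x_{\nS}-x_{\nS}}_{\HS}^2}{\XX_{\nS}}=\mathsf{g}(x_{\nS}),\qquad\mathsf{g}\colon\xS\mapsto\EE\norm{\TS_k\xS-\xS}_{\HS}^2 .
\end{equation*}
Integrability holds because $\norm{\TS_k\xS-\xS}_{\HS}\leq\norm{\xS-\zS}_{\HS}$. Therefore $\sum_{\nS}\mathsf{g}(x_{\nS})<\pinf$ and $\mathsf{g}(x_{\nS})\to0$ $\Pas$

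The main obstacle is converting $\mathsf{g}(x_{\nS}(\upomega))\to0$ into the statement that every weak cluster point $\xS$ of $(x_{\nS}(\upomega))_{\nnn}$ lies in $\Fix\TS_{k(\upomega')}$ for $\PP$-a.e.\ $\upomega'$. Demiclosedness requires strong convergence $\TS_{\kS}x_{\kS_{\jS}}\to x_{\kS_{\jS}}$ for a fixed $\kS$, whereas we only have $L^2$ convergence in $\upomega'$ along the subsequence. I would first pass to a further subsequence, allowed to depend on $\upomega$, along which $\norm{\TS_{k(\upomega')}x_{\kS_{\jS}}(\upomega)-x_{\kS_{\jS}}(\upomega)}_{\HS}\to0$ for $\PP$-a.e.\ $\upomega'$; this follows from Borel--Cantelli after choosing $\mathsf{g}(x_{\kS_{\jS}}(\upomega))\leq2^{-\jS}$. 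Demiclosedness of $\Id-\TS_{k(\upomega')}$ at $\mathsf{0}$ then gives $\xS\in\Fix\TS_{k(\upomega')}$ for a.e.\ $\upomega'$, that is, $\xS\in\ZS$.

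Some care is needed to make the exceptional $\upomega$-sets independent of the cluster point. This is handled by working on a single full-measure event where $\mathsf{g}(x_{\nS})\to0$, $(x_{\nS})_{\nnn}$ is bounded, and the conditional expectation identities hold for all $\nS$ simultaneously. Theorem~\ref{t:1}\ref{t:1v} then concludes.
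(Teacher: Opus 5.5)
Your proposal is correct and follows essentially the paper's route: embed \eqref{e:p2} in Algorithm~\ref{algo:1}; combine Theorem~\ref{t:1}\ref{t:1iv} with $\lambda_{\nS}\geq\upvarepsilon$, $L_{\nS}\geq1$, the $\updelta$-weight bound, and Lemma~\ref{l:7} to get $\sum_{\nS}\mathsf{g}(x_{\nS})<\pinf$ $\Pas$; then use demiclosedness to place weak cluster points in $\ZS$ and invoke Theorem~\ref{t:1}\ref{t:1v}. Your cut derivation from $\scal{\zS-p_{\iS,\nS}}{x_{\nS}-p_{\iS,\nS}}_{\HS}\leq0$, your bound $\norm{a_{\nS}-x_{\nS}}_{\HS}\leq\norm{\zS-x_{\nS}}_{\HS}$, and your countable-dense-subset argument (which makes the uniform null set explicit, a point the paper leaves implicit) are tidier than the paper's computation, whereas the subsequence/Borel--Cantelli extraction is unnecessary, since the paper applies monotone convergence to $\sum_{\nS}\mathsf{g}(x_{\nS}(\upomega))<\pinf$ directly to get $\TS_{k(\upomega')}x_{\nS}(\upomega)-x_{\nS}(\upomega)\to0$ along the whole sequence for $\PP$-a.e.\ $\upomega'$.
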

\begin{proof}
We will show that the sequence constructed by \eqref{e:p2}
corresponds to a sequence generated by Algorithm~\ref{algo:1}. 
First, let us show that
\begin{equation}
\label{e:405}
(\forall\nnn)\quad L_{\nS}\geq1\;\:\Pas
\end{equation}
Fix $\zS\in\ZS$ and $\nnn$. For every 
$\iS\in\{1,\ldots,\mathsf{M}\}$, let
$\upOmega_{\iS,\nS}\in\FE$ be such that 
\begin{equation}
\label{e:s7}
\PP(\upOmega_{\iS,\nS})=1\quad\text{and}\quad
(\forall\upomega\in\upOmega_{\iS,\nS})\quad
\zS\in\Fix\TS_{k_{\iS,\nS}(\upomega)}. 
\end{equation}
Thanks to \eqref{e:s7}, we then choose
$\upOmega_{\nS}\in\FE$ such that 
\begin{equation}
\PP(\upOmega_{\nS})=1\quad\text{and}\quad
(\forall\upomega\in\upOmega_{\nS})\quad
\bigcap_{1\leq\iS\leq\mathsf{M}}\Fix\TS_{k_{\iS,\nS}(\upomega)}
\neq\emp\quad\text{and}\quad
\sum_{\iS=1}^\mathsf{M}\beta_{\iS,\nS}(\upomega)=1.
\end{equation}
Given $\upomega\in\upOmega_{\nS}$, we study the following two
cases:
\begin{itemize}
\item
Suppose that $p_{\nS}(\upomega)=x_{\nS}(\upomega)$. Then
\cite[Proposition~2.4]{Else01} shows that
\begin{equation}
x_{\nS}(\upomega)\in\Fix\brk3{\sum_{\iS=1}^{\mathsf{M}}
\beta_{\iS,\nS}(\upomega)\TS_{k_{\iS,\nS}(\upomega)}}=
\bigcap_{1\leq\iS\leq\mathsf{M}}\Fix\TS_{k_{\iS,\nS}(\upomega)},
\end{equation}
hence, for every $\iS\in\{1,\ldots,\mathsf{M}\}$, 
$x_{\nS}(\upomega)=p_{\iS,\nS}(\upomega)$. Thus,
\begin{equation}
L_{\nS}(\upomega)=
\dfrac{{\Sum_{\iS=1}^{\mathsf{M}}\beta_{\iS,\nS}(\upomega)
\norm1{p_{\iS,\nS}(\upomega)-x_{\nS}(\upomega)}_{\HS}^2}+
\mathsf{1}_{\bigl[p_{\nS}=x_{\nS}\bigr]}(\upomega)}
{\norm1{p_{\nS}(\upomega)-x_{\nS}(\upomega)}_{\HS}^2+
\mathsf{1}_{\bigl[p_{\nS}=x_{\nS}\bigr]}(\upomega)}
=\dfrac{\mathsf{1}_{\bigl[p_{\nS}=x_{\nS}\bigr]}(\upomega)}
{\mathsf{1}_{\bigl[p_{\nS}=x_{\nS}\bigr]}(\upomega)}=1.
\end{equation}
\item
Suppose that
$p_{\nS}(\upomega)\neq x_{\nS}(\upomega)$. Then the convexity of
$\|\cdot\|_{\HS}^2$ yields
\begin{equation}
0<\norm1{p_{\nS}(\upomega)-x_{\nS}(\upomega)}_{\HS}^2
=\norm3{\sum_{\iS=1}^{\mathsf{M}}
\beta_{\iS,\nS}(\upomega)
\brk1{p_{\iS,\nS}(\upomega)-x_{\nS}(\upomega)}}_{\HS}^2
\leq\sum_{\iS=1}^{\mathsf{M}}\beta_{\iS,\nS}(\upomega)
\norm1{p_{\iS,\nS}(\upomega)-x_{\nS}(\upomega)}_{\HS}^2,
\end{equation}
which implies that $L_{\nS}(\upomega)\geq1$. 
\end{itemize}
Next, we will show by induction that
$(x_{\nS})_{\nnn}$ and $(a_{\nS})_{\nnn}$ are in 
$L^2(\upOmega,\FE,\PP;\HS)$. Let 
$\iS\in\{1,\ldots,\mathsf{M}\}$ and suppose that 
$x_{\nS}\in L^2(\upOmega,\FE,\PP;\HS)$. Then
$\TS_{k_{\iS,\nS}}x_{\nS}=
\boldsymbol{\TS}\circ(k_{\iS,\nS},x_{\nS})$ is measurable.
Furthermore, for every $\upomega\in\upOmega_{\iS,\nS}$, 
$2\TS_{k_{\iS,\nS}(\upomega)}-\Id$ is quasinonexpansive 
with $\Fix(2\TS_{k_{\iS,\nS}(\upomega)}-\Id)=
\Fix\TS_{k_{\iS,\nS}(\upomega)}$
\cite[Proposition~2.2(v)]{Else01}. Hence
\begin{align}
2\norm1{p_{\iS,\nS}(\upomega)}^2_{\HS}
&=\dfrac{1}{2}
\norm1{2\TS_{k_{\iS,\nS}(\upomega)}x_{\nS}(\upomega)}^2_{\HS}
\nonumber\\
&\leq\norm1{\brk1{2\TS_{k_{\iS,\nS}(\upomega)}
-\Id}x_{\nS}(\upomega)-\zS}^2_{\HS}
+\norm1{x_{\nS}(\upomega)+\zS}^2_{\HS}
\nonumber\\
&\leq\norm1{x_{\nS}(\upomega)-\zS}^2_{\HS}
+\norm1{x_{\nS}(\upomega)+\zS}^2_{\HS}.
\end{align}
Thus, since 
$x_{\nS}\in L^2(\upOmega,\FE,\PP;\HS)$ and $\zS\in\HS$, we have
$p_{\iS,\nS}\in L^2(\upOmega,\FE,\PP;\HS)$. Hence, \eqref{e:p2} 
yields that $p_{\nS}\in L^2(\upOmega,\FE,\PP;\HS)$ and that 
$a_{\nS}$ is measurable. To show that $a_{\nS}\in
L^2(\upOmega,\FE,\PP;\HS)$, we first note that
\begin{align}
x_{\nS}-a_{\nS}
&=L_{\nS}\brk1{x_{\nS}-p_{\nS}}\nonumber\\
&=\dfrac{{\sum_{\iS=1}^{\mathsf{M}}\beta_{\iS,\nS}
\|p_{\iS,\nS}-x_{\nS}\|_{\HS}^2}+
\mathsf{1}_{\bigl[p_{\nS}=x_{\nS}\bigr]}}
{\|p_{\nS}-x_{\nS}\|_{\HS}^2+
\mathsf{1}_{\bigl[p_{\nS}=x_{\nS}\bigr]}}
\brk1{x_{\nS}-p_{\nS}}
\nonumber\\
&=\dfrac{{\sum_{\iS=1}^{\mathsf{M}}\beta_{\iS,\nS}
\brk1{\scal{x_{\nS}}{x_{\nS}-p_{\iS,\nS}}_{\HS}
-\scal{p_{\iS,\nS}}{x_{\nS}-p_{\iS,\nS}}_{\HS}}}}
{\|x_{\nS}-p_{\nS}\|_{\HS}^2+
\mathsf{1}_{\bigl[p_{\nS}=x_{\nS}\bigr]}}
\brk1{x_{\nS}-p_{\nS}}
\nonumber\\
&=\dfrac{\scal{x_{\nS}}{x_{\nS}-p_{\nS}}_{\HS}
-\sum_{\iS=1}^{\mathsf{M}}\beta_{\iS,\nS}
\scal{p_{\iS,\nS}}{x_{\nS}-p_{\iS,\nS}}_{\HS}}
{\|x_{\nS}-p_{\nS}\|_{\HS}^2+
\mathsf{1}_{\bigl[p_{\nS}=x_{\nS}\bigr]}}
\brk1{x_{\nS}-p_{\nS}}\label{e:998-}\\
&=\dfrac{\scal{x_{\nS}}{x_{\nS}-p_{\nS}}_{\HS}
-\sum_{\iS=1}^{\mathsf{M}}\beta_{\iS,\nS}
\scal{p_{\iS,\nS}-\zS}{x_{\nS}-p_{\iS,\nS}}_{\HS}
-\sum_{\iS=1}^{\mathsf{M}}\beta_{\iS,\nS}
\scal{\zS}{x_{\nS}-p_{\iS,\nS}}_{\HS}}
{\|x_{\nS}-p_{\nS}\|_{\HS}^2+
\mathsf{1}_{\bigl[p_{\nS}=x_{\nS}\bigr]}}
\brk1{x_{\nS}-p_{\nS}}\nonumber\\
&=\dfrac{\scal{x_{\nS}-\zS}{x_{\nS}-p_{\nS}}_{\HS}
-\sum_{\iS=1}^{\mathsf{M}}\beta_{\iS,\nS}
\scal{p_{\iS,\nS}-\zS}{x_{\nS}-p_{\iS,\nS}}_{\HS}}
{\|x_{\nS}-p_{\nS}\|_{\HS}^2+
\mathsf{1}_{\bigl[p_{\nS}=x_{\nS}\bigr]}}
\brk1{x_{\nS}-p_{\nS}}\;\;\Pas
\label{e:998}
\end{align}
On the other hand, it follows from 
\cite[Proposition~4.2(iv)]{Livre1} that
\begin{equation}
(\forall\iS\in\{1,\dots,\mathsf{M}\})\quad
\scal1{p_{\iS,\nS}-\zS}{x_{\nS}-p_{\iS,\nS}}_{\HS}
=\scal1{\TS_{k_{\iS,\nS}}x_{\nS}-\zS}
{x_{\nS}-\TS_{k_{\iS,\nS}}x_{\nS}}_{\HS}\geq0\;\;\Pas
\end{equation}
In turn, the concavity of
$\mathsf{y}\mapsto\scal{\mathsf{y}-\zS}
{x_{\nS}(\upomega)-\mathsf{y}}_{\HS}$ yields
\begin{equation}
\label{e:c2}
0\leq\sum_{\iS=1}^{\mathsf{M}}\beta_{\iS,\nS}
\scal1{p_{\iS,\nS}-\zS}{x_{\nS}-p_{\iS,\nS}}_{\HS}
\leq \scal1{p_{\nS}-\zS}{x_{\nS}-p_{\nS}}_{\HS}\;\;\Pas
\end{equation}
and therefore it follows from the convexity of the norm square 
and \eqref{e:998} that
\begin{align}
\dfrac{1}{2}\EE\|x_{\nS}-a_{\nS}\|_{\HS}^2
&=\dfrac{1}{2}\EE
\norm3{\dfrac{\scal{x_{\nS}-\zS}{x_{\nS}-p_{\nS}}_{\HS}
-\sum_{\iS=1}^{\mathsf{M}}\beta_{\iS,\nS}
\scal{p_{\iS,\nS}-\zS}{x_{\nS}-p_{\iS,\nS}}_{\HS}}
{\|x_{\nS}-p_{\nS}\|_{\HS}^2+
\mathsf{1}_{\bigl[p_{\nS}=x_{\nS}\bigr]}}
\brk1{x_{\nS}-p_{\nS}}}^2_{\HS}\nonumber\\
&=\dfrac{1}{2}\EE
\abs3{\dfrac{\scal{x_{\nS}-\zS}{x_{\nS}-p_{\nS}}_{\HS}
-\sum_{\iS=1}^{\mathsf{M}}\beta_{\iS,\nS}
\scal{p_{\iS,\nS}-\zS}{x_{\nS}-p_{\iS,\nS}}_{\HS}}
{\|x_{\nS}-p_{\nS}\|_{\HS}+
\mathsf{1}_{\bigl[p_{\nS}=x_{\nS}\bigr]}}}^2\nonumber\\
&\leq\EE
\abs3{\dfrac{\scal{x_{\nS}-\zS}{x_{\nS}-p_{\nS}}_{\HS}}
{\|x_{\nS}-p_{\nS}\|_{\HS}
+\mathsf{1}_{\bigl[p_{\nS}=x_{\nS}\bigr]}}}^2
+\EE\abs3{\dfrac{
\sum_{\iS=1}^{\mathsf{M}}\beta_{\iS,\nS}
\scal{p_{\iS,\nS}-\zS}{x_{\nS}-p_{\iS,\nS}}_{\HS}}
{\|x_{\nS}-p_{\nS}\|_{\HS}
+\mathsf{1}_{\bigl[p_{\nS}=x_{\nS}\bigr]}}}^2\nonumber\\
&\leq\EE
\abs3{\dfrac{\norm{x_{\nS}-\zS}_{\HS}\norm{x_{\nS}-p_{\nS}}_{\HS}}
{\|x_{\nS}-p_{\nS}\|_{\HS}
+\mathsf{1}_{\bigl[p_{\nS}=x_{\nS}\bigr]}}}^2
+\EE\abs3{\dfrac{
\scal{p_{\nS}-\zS}{x_{\nS}-p_{\nS}}_{\HS}}
{\|x_{\nS}-p_{\nS}\|_{\HS}
+\mathsf{1}_{\bigl[p_{\nS}=x_{\nS}\bigr]}}}^2\nonumber\\
&\leq\EE\norm{x_{\nS}-\zS}_{\HS}^2
+\EE\abs3{\dfrac{
\norm{p_{\nS}-\zS}_{\HS}\norm{x_{\nS}-p_{\nS}}_{\HS}}
{\|x_{\nS}-p_{\nS}\|_{\HS}
+\mathsf{1}_{\bigl[p_{\nS}=x_{\nS}\bigr]}}}^2\nonumber\\
&\leq\EE\norm{x_{\nS}-\zS}_{\HS}^2
+\EE\norm{p_{\nS}-\zS}_{\HS}^2\nonumber\\
&<\pinf.
\end{align}
Since $\{x_{\nS},p_{\nS}\}\subset L^2(\upOmega,\FE,\PP;\HS)$
and $\zS\in\HS$, we thus obtain
$x_{\nS}-a_{\nS}\in L^2(\upOmega,\FE,\PP;\HS)$ and hence
$a_{\nS}\in L^2(\upOmega,\FE,\PP;\HS)$. Moreover, it follows 
from 
\eqref{e:p2}
that $r_{\nS}\in L^2(\upOmega,\FE,\PP;\HS)$. On the other hand, we 
deduce from
\eqref{e:p2} and \eqref{e:998-} that
\begin{align}
\scal{a_{\nS}}{x_{\nS}-a_{\nS}}_{\HS}
&=L_{\nS}\scal1{x_{\nS}+L_{\nS}(p_{\nS}-x_{\nS})}
{x_{\nS}-p_{\nS}}_{\HS}\nonumber\\
&=L_{\nS}\brk2{\scal{x_{\nS}}{x_{\nS}-p_{\nS}}_{\HS}
-L_{\nS}\norm{p_{\nS}-x_{\nS}}_{\HS}^2}\nonumber\\
&=L_{\nS}\brk2{\scal{x_{\nS}}{x_{\nS}-p_{\nS}}_{\HS}
-\scal{x_{\nS}}{x_{\nS}-p_{\nS}}_{\HS}
+\sum_{\iS=1}^{\mathsf{M}}
\beta_{\iS,\nS}\scal{p_{\iS,\nS}}{x_{\nS}-p_{\iS,\nS}}_{\HS}}
\nonumber\\
&=L_{\nS}\sum_{\iS=1}^{\mathsf{M}}
\beta_{\iS,\nS}\scal{p_{\iS,\nS}}{x_{\nS}-p_{\iS,\nS}}_{\HS}
\;\;\Pas
\label{e:406}
\end{align}
Furthermore, \eqref{e:405}, \eqref{e:c2}, and \eqref{e:406} yield
\begin{align}
\scal{\zS}{x_{\nS}-a_{\nS}}_{\HS}
&=L_{\nS}\scal{\zS}{x_{\nS}-p_{\nS}}_{\HS}\nonumber\\
&=L_{\nS}\sum_{\iS=1}^{\mathsf{M}}\beta_{\iS,\nS}
\scal{\zS}{x_{\nS}-p_{\iS,\nS}}_{\HS}\nonumber\\
&\leq L_{\nS}\sum_{\iS=1}^{\mathsf{M}}\beta_{\iS,\nS}
\scal{p_{\iS,\nS}}{x_{\nS}-p_{\iS,\nS}}_{\HS}\nonumber\\
&=\scal{a_{\nS}}{x_{\nS}-a_{\nS}}_{\HS}\;\;\Pas,
\end{align}
which shows that $\ZS\subset\HS(x_{\nS},a_{\nS})\;\Pas$
We can repeat the arguments in \eqref{e:t12}--\eqref{e:712} to show
that $x_{\nS+1}\in L^2(\upOmega,\FE,\PP;\HS)$. Since 
$x_{\mathsf{0}}\in L^2(\upOmega,\FE,\PP;\HS)$, we conclude 
inductively that
\begin{equation}
(\forall\nnn)\quad \{x_{\nS},a_{\nS}\}\subset 
L^2(\upOmega,\FE,\PP;\HS)\;\;
\text{and}\;\;\ZS\subset\HS(x_{\nS},a_{\nS})\;\Pas
\end{equation}
Therefore, the sequence $(x_{\nS})_{\nnn}$ constructed by 
\eqref{e:p2} corresponds to one generated by 
Algorithm~\ref{algo:1}.
It is therefore enough to show that
$\mathfrak{W}(x_{\nS})_{\nnn}\subset\ZS\;\Pas$ to prove the claim. 
For this purpose, we infer first from \eqref{e:p2} that
\begin{align}
\label{e:21}
\EC1{\norm{a_{\nS}-x_{\nS}}_{\HS}^2}{\XX_{\nS}}
&=\EC2{\norm1{L_{\nS}\brk1{p_{\nS}-x_{\nS}}}_{\HS}^2}{\XX_{\nS}}
\nonumber\\
&=\EC2{\abs1{L_{\nS}}^2\norm1{p_{\nS}-x_{\nS}}_{\HS}^2}{\XX_{\nS}}
\nonumber\\
&=\EC3{L_{\nS}\sum_{\iS=1}^{\mathsf{M}}\beta_{\iS,\nS}
\norm1{p_{\iS,\nS}-x_{\nS}}_{\HS}^2}{\XX_{\nS}}\nonumber\\
&\geq\EC3{\sum_{\iS=1}^{\mathsf{M}}\beta_{\iS,\nS}
\norm1{p_{\iS,\nS}-x_{\nS}}_{\HS}^2}{\XX_{\nS}}\nonumber\\
&\geq\EC2{\updelta\max_{1\leq\jS\leq\mathsf{M}}
\norm1{p_{\jS,\nS}-x_{\nS}}_{\HS}^2}{\XX_{\nS}}\nonumber\\
&\geq\updelta\EC2{\norm1{p_{1,\nS}-x_{\nS}}_{\HS}^2}{\XX_{\nS}}
\nonumber\\
&=\updelta
\EC2{\norm1{\TS_{k_{1,\nS}}x_{\nS}-x_{\nS}}_{\HS}^2}{\XX_{\nS}}.
\end{align}
However, $k_{1,\nS}$ is independent of $\XX_{\nS}$ and is a copy of
$k$. Thus, Lemma~\ref{l:7}
guarantees that, for $\PP$-almost every $\upomega'\in\upOmega$, 
\begin{align}
\EC2{\norm1{\TS_{k_{1,\nS}}x_{\nS}-x_{\nS}}_{\HS}^2}{\XX_{\nS}}
(\upomega')
&=\int_{\upOmega}\norm1{\TS_{k_{1,\nS}(\upomega)}
x_{\nS}(\upomega')-x_{\nS}(\upomega')}_{\HS}^2\PP(d\upomega)
\nonumber\\
&=\int_{\upOmega}\norm1{\TS_{k(\upomega)}
x_{\nS}(\upomega')-x_{\nS}(\upomega')}_{\HS}^2\PP(d\upomega).
\end{align}
Therefore, for $\PP$-almost every $\upomega'\in\upOmega$, 
\eqref{e:21} yields 
\begin{equation}
\label{e:22}
\EC1{\lambda_{\nS}^2\norm{a_{\nS}-x_{\nS}}_{\HS}^2}{\XX_{\nS}}
(\upomega')
\geq\upvarepsilon^2\updelta\int_{\upOmega}\norm1{\TS_{k(\upomega)}
x_{\nS}(\upomega')-x_{\nS}(\upomega')}_{\HS}^2\PP(d\upomega)\;\;
\Pas
\end{equation}
Taking the expected value in \eqref{e:22}, summing over
$\nnn$, and applying Theorem~\ref{t:1}\ref{t:1iv}, we find that
\begin{equation}
\EE\brk3{\sum_{\nnn}\int_{\upOmega}
\norm1{\TS_{k(\upomega)}x_{\nS}-x_{\nS}}_{\HS}^2\PP(d\upomega)}
=\sum_{\nnn}\EE\brk3{\int_{\upOmega}
\norm1{\TS_{k(\upomega)}x_{\nS}-x_{\nS}}_{\HS}^2\PP(d\upomega)}
<\pinf,
\end{equation}
which implies
\begin{equation}
\sum_{\nnn}\int_{\upOmega}\norm1{
\TS_{k(\upomega)}x_{\nS}-x_{\nS}}_{\HS}^2
\PP(d\upomega)<\pinf\;\;\Pas
\end{equation} 
This leads to the existence of a set $\upOmega'\in\FE$ such that
$\PP(\upOmega')=1$ and, 
for every $\upomega'\in\upOmega'$, the series 
$\sum_{\nnn}\int_{\upOmega}
\norm{\TS_{k(\upomega)}x_{\nS}(\upomega')
-x_{\nS}(\upomega')}_{\HS}^2\PP(d\upomega)$ converges.
Furthermore, in view of Theorem~\ref{t:1}\ref{t:1ii-}, we can 
assume that $\WC(x_{\nS}(\upomega'))_{\nnn}\neq\emp$ on 
$\upOmega'$. Fix $\upomega'\in\upOmega'$ and let $x(\upomega')\in
\WC(x_{\nS}(\upomega'))_{\nnn}$, say 
$x_{\jS_{\nS}}(\upomega')\weakly x(\upomega')$. It follows from the
monotone convergence theorem that
\begin{equation}
\int_{\upOmega}\sum_{\nnn}
\norm1{\TS_{k(\upomega)}x_{\nS}(\upomega')
-x_{\nS}(\upomega')}_{\HS}^2\PP(d\upomega)
=\sum_{\nnn}\int_{\upOmega}
\norm1{\TS_{k(\upomega)}x_{\nS}(\upomega')
-x_{\nS}(\upomega')}_{\HS}^2\PP(d\upomega)<\pinf.
\end{equation}
Hence, for $\PP$-almost every $\upomega\in\upOmega$, 
$\sum_{\nnn}\norm{\TS_{k(\upomega)}x_{\nS}(\upomega')
-x_{\nS}(\upomega')}_{\HS}^2<\pinf$.
Therefore, there exists 
$\upOmega''\in\FE$ such that $\PP(\upOmega'')=1$ and 
\begin{equation}
\brk1{\forall\upomega\in\upOmega''}\quad 
\TS_{k(\upomega)}x_{\nS}(\upomega')-
x_{\nS}(\upomega')\to 0.
\end{equation}
It then follows from the demiclosedness of the operators 
$(\Id-\TS_{\kS})_{\kS\in\mathsf{K}}$ at $\mathsf{0}$ that
\begin{equation}
\brk1{\forall\upomega\in\upOmega''}\quad
\TS_{k(\upomega)}x(\upomega')=x(\upomega').
\end{equation}
Therefore $x(\upomega')\in\menge{\mathsf{z}\in\HS}{\mathsf{z}\in
\Fix\TS_{k}\;\Pas}=\ZS$. Since $\upomega'$ is arbitrarily taken in 
$\upOmega'$, we conclude that 
$\WC(x_{\nS})_{\nnn}\subset\ZS\;\Pas$,
and the claim thus follows from Theorem~\ref{t:1}\ref{t:1v}.
\end{proof}

\begin{remark}
The algorithm of Theorem~\ref{t:2} is inspired by the deterministic
extrapolated algorithm proposed in \cite[Section~6.5]{sicon1},
where the collection of sets is at most countably infinite and the
control rule $\kS_{\iS,\nS}$, the weights $\upbeta_{\iS,\nS}$, and
the relaxation parameters $\uplambda_{\nS}$ are deterministic.
\end{remark}

\begin{remark}
The random activation of indices in Theorem~\ref{t:2} replaces the
control rule strategies of deterministic methods. Thus, the same
approach to randomize the control rule can be used, for instance,
to design randomized block-iterative strongly convergent
algorithms to find the best approximation from the Kuhn-Tucker set
associated with a primal-dual monotone inclusion problem
\cite{Sadd22,Acnu24,MaPr18}. 
\end{remark}

\subsection{Best approximation from a finite collection of closed
convex sets}
\label{sec:42}

To connect Theorem~\ref{t:2} with Haugazeau's original work, we
specialize it to solve the following simple best approximation
problem.

\begin{problem}
\label{prob:2}
Let $(\ZS_{\kS})_{1\leq\kS\leq\mathsf{p}}$ be a finite
collection of closed convex subsets of $\HS$ such that
$\ZS=\bigcap_{1\leq\kS\leq\mathsf{p}}\ZS_{\kS}\neq\emp$ and
let $\xS_{\mathsf{0}}\in\HS$. The goal is to find
$\proj_{\ZS}\xS_{\mathsf{0}}$. 
\end{problem}

To implement Theorem~\ref{t:2}, we need first to embed
Problem~\ref{prob:2} into the setting of Problem~\ref{prob:1}. Let
us define then 
\begin{equation}
\label{e:kdef}
\begin{cases}
\KS=\{1,\ldots,\mathsf{p}\};\\
\EuScript{K}=2^{\KS};\\
k\;\text{is any}\;\KS\text{-valued random
variable such that}\;
(\forall\kS\in\{1,\ldots,\mathsf{p}\})\;\PP\brk1{[k=\kS]}>0.
\end{cases}
\end{equation}
Then $\ZS=\menge{\zS\in\HS}{\zS\in\ZS_{k}\;\Pas}
=\menge{\zS\in\HS}{\zS\in\Fix\proj_{\ZS_{k}}\;\Pas}$ where,
following Example~\ref{ex:51}\ref{ex:51i}, the firmly
quasinonexpansive operators correspond to the projectors. We remark
that any random variable $k$ satisfying \eqref{e:kdef} suffices to
define the randomized method. In particular, $k$ may be uniformly
distributed over $\KS$, i.e., for every
$\kS\in\{1,\ldots,\mathsf{p}\}$, $\PP([k=\kS])=1/\mathsf{p}$.
However, \eqref{e:kdef} also allows for the implementation of 
nonuniform weights.

We implement Theorem~\ref{t:2}
with $\lambda_{\nS}\equiv 1$ and $\mathsf{M}=1$, which
implies that $L_{\nS}\equiv 1$ and $r_{\nS}\equiv a_{\nS}\equiv
p_{\nS}\equiv p_{1,\nS}$.
\begin{corollary}
\label{c:2}
In the setting of Problem~\ref{prob:2} and \eqref{e:kdef}, iterate
\begin{equation}
\begin{array}{l}
\textup{for}\;\nS=0,1,\ldots\\
\left\lfloor
\begin{array}{l}
k_{\nS}\;\textup{is a copy of $k$ and is independent of}\;
\upsigma(\xS_{\mathsf{0}},\dots,x_{\nS})\\
x_{\nS+1}=\QS\brk1{\xS_{\mathsf{0}},x_{\nS},
\proj_{\ZS_{k_{\nS}}}x_{\nS}}.
\end{array}
\right.\\
\end{array}
\end{equation}
Then $(x_{\nS})_{\nnn}$ converges strongly $\Pas$ and strongly in 
$L^2(\upOmega,\FE,\PP;\HS)$ to $\proj_{\ZS}\xS_{\mathsf{0}}$.
\end{corollary}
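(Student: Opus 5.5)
The plan is to recognise the iteration of Corollary~\ref{c:2} as the special case of \eqref{e:p2} singled out just before the statement, and then invoke Theorem~\ref{t:2}. We take $\mathsf{K}=\{1,\ldots,\mathsf{p}\}$ and $\EuScript{K}=2^{\mathsf{K}}$, set $\TS_{\kS}=\proj_{\ZS_{\kS}}$ for every $\kS\in\mathsf{K}$, and let $k$ be as in \eqref{e:kdef}. The constant deterministic starting point $x_{\mathsf{0}}\equiv\xS_{\mathsf{0}}$ lies in $L^2(\upOmega,\FE,\PP;\HS)$. The first task is to check that this data satisfies the hypotheses of Problem~\ref{prob:1}.

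\emph{Hypotheses of Problem~\ref{prob:1}.} By Example~\ref{ex:51}\ref{ex:51i}, each $\TS_{\kS}$ is firmly quasinonexpansive with $\Fix\TS_{\kS}=\ZS_{\kS}$. Each $\proj_{\ZS_{\kS}}$ is firmly nonexpansive, so $\Id-\TS_{\kS}$ is demiclosed at $\mathsf{0}$ by the demiclosedness principle for nonexpansive operators \cite{Livre1}. Joint measurability of $(\kS,\xS)\mapsto\proj_{\ZS_{\kS}}\xS$ follows because $\mathsf{K}$ is finite with the discrete $\upsigma$-algebra: the preimage of a Borel set $\mathsf{B}$ is $\bigcup_{\kS}\{\kS\}\times\proj_{\ZS_{\kS}}^{-1}(\mathsf{B})$, and each projector is continuous.

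\emph{Identification of $\ZS$.} This is the step that uses the positivity condition in \eqref{e:kdef}. If $\zS\in\bigcap_{\kS}\ZS_{\kS}$, then $\zS\in\ZS_{k}$ $\Pas$ trivially. Conversely, suppose $\zS\in\ZS_{k}$ $\Pas$ but $\zS\notin\ZS_{\kS}$ for some $\kS$. Then the event $[k=\kS]$, which has positive probability, is contained in the null set where $\zS\notin\ZS_{k}$, a contradiction. Hence the set in \eqref{e:Z} equals $\ZS\neq\emp$.

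\emph{Reduction to the corollary's iteration.} Apply Theorem~\ref{t:2} with $\mathsf{M}=1$ and $\lambda_{\nS}\equiv1$; the latter is admissible for any $\upvarepsilon\in\zeroun$. Choose any $\updelta\in\left]0,1\right[$ and $\beta_{1,\nS}\equiv1$, which meets the constraint since the indicator is at most $1$. Then $p_{\nS}=p_{1,\nS}=\proj_{\ZS_{k_{1,\nS}}}x_{\nS}$.

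For $L_{\nS}$: on $[p_{\nS}\neq x_{\nS}]$ the numerator and denominator coincide, and on $[p_{\nS}=x_{\nS}]$ both equal $1$. Hence $L_{\nS}=1$, so $a_{\nS}=r_{\nS}=p_{\nS}$ and $x_{\nS+1}=\QS(\xS_{\mathsf{0}},x_{\nS},\proj_{\ZS_{k_{\nS}}}x_{\nS})$ with $k_{\nS}=k_{1,\nS}$.

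The independence requirement also matches. In \eqref{e:p2}, $\XX_{\nS}=\upsigma(x_{\mathsf{0}},\ldots,x_{\nS})$, and since $x_{\mathsf{0}}$ is constant this is the $\upsigma$-algebra appearing in the corollary.

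Theorem~\ref{t:2} then yields strong convergence $\Pas$ and in $L^2(\upOmega,\FE,\PP;\HS)$ to $\proj_{\ZS}\xS_{\mathsf{0}}$. No step is a real obstacle; the only point needing care is the identification of $\ZS$ above.
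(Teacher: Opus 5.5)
Your proposal is correct and matches the paper's argument. You embed Problem~\ref{prob:2} into Problem~\ref{prob:1} via \eqref{e:kdef} with $\TS_{\kS}=\proj_{\ZS_{\kS}}$, then apply Theorem~\ref{t:2} with $\mathsf{M}=1$ and $\lambda_{\nS}\equiv1$, which gives $L_{\nS}\equiv1$ and $r_{\nS}=a_{\nS}=p_{\nS}=p_{1,\nS}$. The paper states this specialization without proof. Your checks of demiclosedness, joint measurability, and the identification of $\ZS$ using $\PP([k=\kS])>0$ correctly supply the details it leaves implicit.
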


\begin{remark}\
\label{r:1}
\begin{enumerate}
\item
\label{r:1i}
The proposed randomized strategy differs fundamentally from
Haugazeau's original cyclic approach. The cyclic method follows a
deterministic order, ensuring that every set is activated exactly
once every $\mathsf{p}$ iterations. This places a strict bound
on how long any specific set is ignored. On the other hand, the
random strategy selects the active index independently at each
iteration. While this approach does not enforce a fixed time
between visits to a specific set, it still ensures that all sets
are visited infinitely often with probability one. Hence,
it provides a simpler alternative that avoids rigid patterns.
Moreover, the expected waiting time to activate a specific index
$\kS$ remains finite since it is the inverse of the probability of
activation, e.g., it is equal to $\mathsf{p}$ in the case of
uniformly distributed $k$.
\item
\label{r:1ii}
Another way to analyze the asymptotic behavior of the algorithm is
to fix $\upomega\in\upOmega$ and study the behavior of
$(x_{\nS}(\upomega))_{\nnn}$ via the control rule
$(k_{\nS}(\upomega))_{\nnn}$. 
In this case, the convergence of $(x_{\nS}(\upomega))_{\nnn}$
cannot be established by appealing to deterministic results since
$(k_{\nS}(\upomega))_{\nnn}$ fails to satisfy the necessary 
assumptions; see \cite[Definition~3.1 and Theorem~3.1]{sicon1}. 
\item
\label{r:1iii}
To compare Haugazeau's original cyclic approach with the randomized
strategy, we use $\lambda_{\nS}\equiv 1$ and $\mathsf{M}=1$ in
Corollary~\ref{c:2}. However, randomly selected block-iterative
versions can also be implemented, where the extrapolation
$L_{\nS}\geq 1$ will significantly accelerate the convergence, as
shown for deterministic algorithms \cite{Sign03,jat4,Pier76}. 
\end{enumerate}
\end{remark}

\section{Application to the computation of a Chebyshev center}
\label{sec:5}

We specialize the setting to $\HS=\RR^{\mathsf{N}}$ and 
let $\mathsf{S}$ be a nonempty bounded subset of $\HS$. A Chebyshev
center of $\mathsf{S}$ and the Chebyshev radius of $\mathsf{S}$ are
a solution and the optimal value, respectively, of the problem
\begin{equation}
\label{e:chb}
\minimize{\xS\in\HS}\sup_{\yS\in\mathsf{S}}\;\norm{\xS-\yS}_{\HS}.
\end{equation}
We refer to \cite[Chapter~15]{Alim22} for further background on 
Chebyshev centers.
The goal is to find a Chebyshev center of $\mathsf{S}$ and the
Chebyshev radius of $\mathsf{S}$.
However, this task can be difficult, particularly
if the set $\mathsf{S}$ is nonconvex, which makes \eqref{e:chb}
nonconvex as well. Another way to rewrite this problem is as the
following constrained minimization in a higher space:
\begin{equation}
\label{e:chb2}
\minimize{(\xS,\uprho)\in\HS\times\RR}
\uprho\;\;\text{subject to}\;\;
(\xS,\uprho)\in\bigcap_{\yS\in\mathsf{S}}
\menge{(\zS,\upxi)\in\HS\times\RR}
{\norm{\zS-\yS}_{\HS}\leq\upxi}.
\end{equation}
For every $\yS\in\mathsf{S}$, the set
$\menge{(\zS,\upxi)\in\HS\times\RR}
{\norm{\zS-\yS}_{\HS}\leq\upxi}$ is closed
and convex, and so is the intersection. 
We propose the following
$\upalpha$-approximation of \eqref{e:chb2}.

\begin{problem}
\label{prob:3}
Set $\mathsf{K}=\mathsf{S}$, let $\EuScript{K}$ be the Borel
$\upsigma$-algebra of $\HS$ restricted to $\mathsf{S}$, and let $k$
be an $\mathsf{S}$-valued random variable with uniform distribution
over $\mathsf{S}$. Let $\upalpha\in\RPP$ and set
$(\xS_{\mathsf{0}},\uprho_{\mathsf{0}})=(\mathsf{0},-\upalpha)$.
The task is to
\begin{equation}
\minimize{(\xS,\uprho)\in\HS\times\RR}
\norm1{(\xS,\uprho)
-(\xS_{\mathsf{0}},\uprho_{\mathsf{0}})}_{\HS\times\RR}^2
\;\;\text{subject to}\;\;
(\xS,\uprho)\in
\menge{(\zS,\upxi)\in\HS\times\RR}
{\norm{\zS-k}_{\HS}\leq\upxi\;\;\Pas}.
\end{equation}
\end{problem}

\begin{figure}[b!]
\centering
\begin{tabular}{c@{}c@{}c@{}}
\includegraphics[width=5.0cm,height=5.0cm]{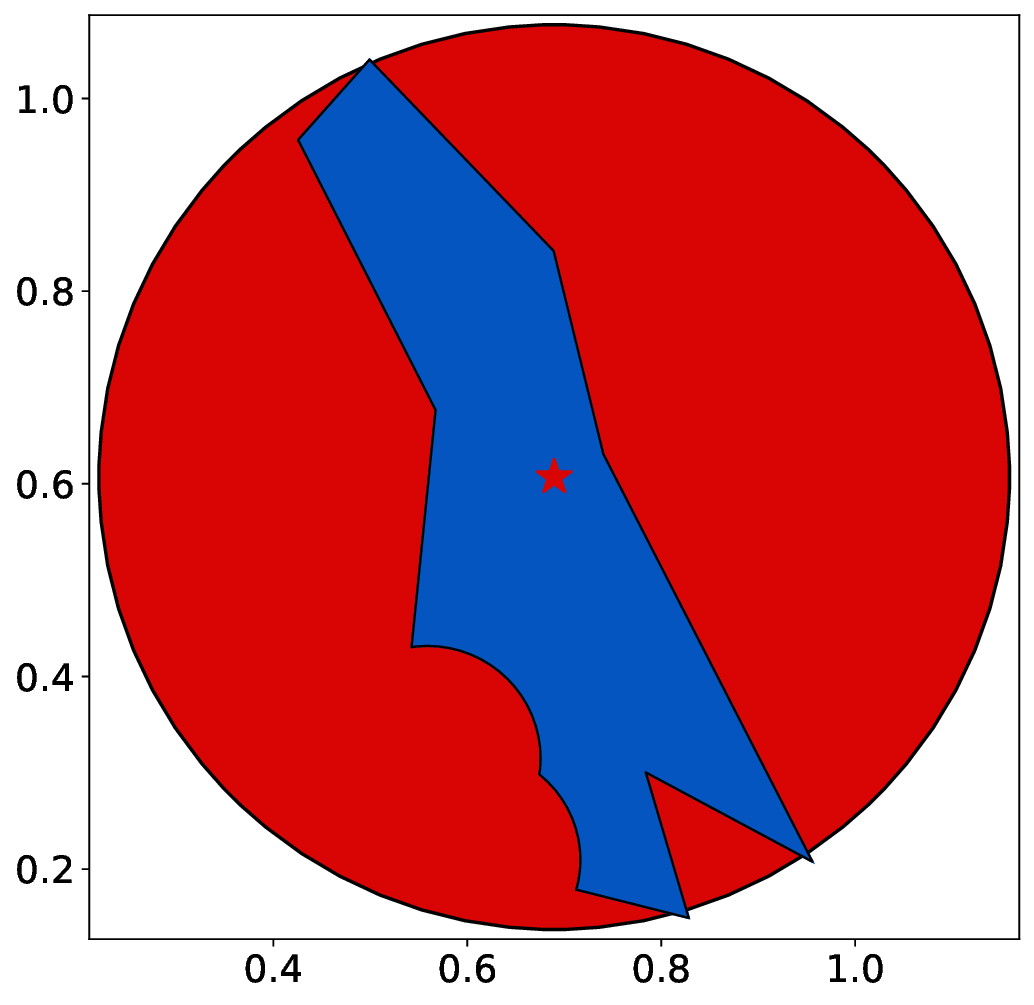}&
\hspace{0.1cm}
\includegraphics[width=5.0cm,height=5.0cm]{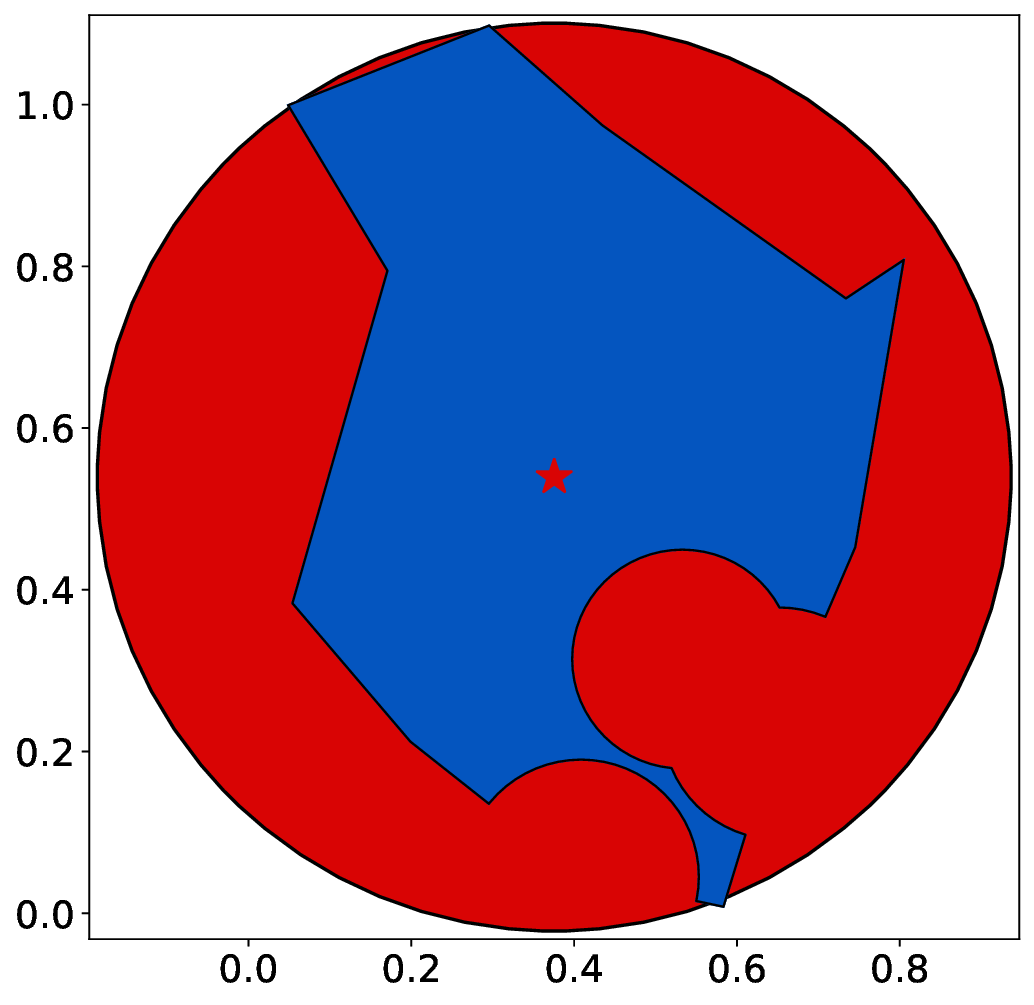}&
\hspace{0.1cm}
\includegraphics[width=5.0cm,height=5.0cm]{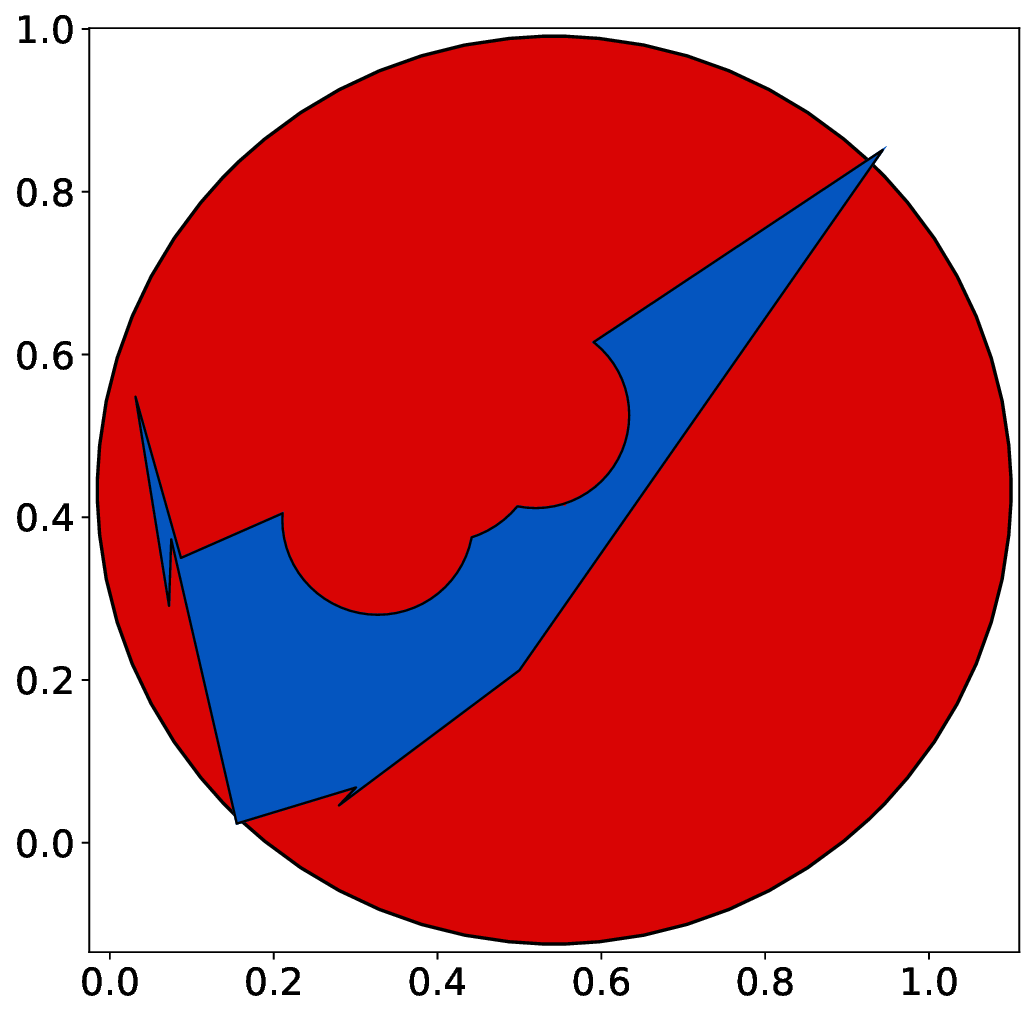}
\end{tabular} 
\caption{
Three instances of the experiment of Section~\ref{sec:5}.
{\color{dblue} Blue}: The subset $\mathsf{S}$. 
{\color{dred} Red}: The ball with center and radius given by the 
$\upalpha$-approximation.}
\label{fig:ex2.1}
\end{figure}%

This $\upalpha$-approximation of \eqref{e:chb2} is a best
approximation problem in the sense of Problem~\ref{prob:1} under
the quasinonexpansive operators defined by the projectors, as in
Example~\ref{ex:51}\ref{ex:51i}. The objective function
\begin{equation}
(\xS,\uprho)\mapsto\norm1{(\xS,\uprho)
-(\xS_{\mathsf{0}},\uprho_{\mathsf{0}})}_{\HS\times\RR}^2=
\norm{\xS}_{\HS}^2+\abs{\uprho}^2
+2\upalpha\uprho+\abs{\upalpha}^2
\end{equation}
corresponds to a regularization of the objective function of
\eqref{e:chb2} and the distance between their minimizers vanishes
as $\upalpha\uparrow\pinf$. Furthermore, we replace the strict
intersection over every $\yS\in\mathsf{S}$ with the almost sure
intersection defined by $k$. 

In our experiment, we solve three instances of Problem~\ref{prob:3}
for $\HS=\RR^2$, $\upalpha=200$, and where $\mathsf{S}$ is
generated randomly. For every $\yS\in\mathsf{S}$, the set
$\menge{(\zS,\upxi)\in\HS\times\RR}
{\norm{\zS-\yS}_{\HS}\leq\upxi}$ corresponds to the translated
second-order cone and its projector operator is known explicitly
\cite[Proposition~3.3]{Fuku02}. We employ Theorem~\ref{t:2} with
$\mathsf{M}=1$ and $\lambda_{\nS}\equiv 1$.

\section*{Acknowledgement}

This work is part of the author's Ph.D. dissertation.
The author gratefully acknowledges the guidance of his Ph.D.
advisor P. L. Combettes throughout this work.


\begin{thebibliography}{99}
\setlength{\itemsep}{0pt}
\small

\bibitem{Alim22}
A. R. Alimov and I. G. Tsar'kov,
{\em Geometric Approximation Theory}.
Springer, Cham, 2022.

\bibitem{Nfao15}
A. Alotaibi, P. L. Combettes, and N. Shahzad,
Best approximation from the Kuhn--Tucker set of composite 
monotone inclusions,
{\em Numer. Funct. Anal. Optim.},
vol. 36, pp. 1513--1532, 2015.

\bibitem{Mor1}
H. H. Bauschke and P. L. Combettes, 
A weak-to-strong convergence principle for Fej\'er-monotone
methods in Hilbert spaces,
{\em Math. Oper. Res.},
vol. 26, pp. 248--264, 2001.

\bibitem{Livre1} 
H. H. Bauschke and P. L. Combettes, 
{\em Convex Analysis and Monotone Operator Theory in Hilbert 
Spaces}, 2nd ed. 
Springer, New York, 2017.

\bibitem{Sadd22}
M. N. B\`ui and P. L. Combettes,
Multivariate monotone inclusions in saddle form,
{\em Math. Oper. Res.},
vol. 47, pp. 1082--1109, 2022.

\bibitem{sicon1}
P. L. Combettes,
Strong convergence of block-iterative outer approximation methods
for convex optimization,
{\em SIAM J. Control Optim.},
vol. 38, pp. 538--565, 2000.

\bibitem{Else01} 
P. L. Combettes,
Quasi-Fej\'erian analysis of some optimization algorithms, in:
{\em Inherently Parallel Algorithms for Feasibility and 
Optimization}, 
(D. Butnariu, Y. Censor, and S. Reich, eds.),
pp. 115--152. Elsevier, New York, 2001.

\bibitem{Sign03} 
P. L. Combettes, 
A block-iterative surrogate constraint splitting method for 
quadratic signal recovery,
{\em IEEE Trans. Signal Process.}, 
vol. 51, pp. 1771--1782, 2003.

\bibitem{Acnu24} 
P. L. Combettes,
The geometry of monotone operator splitting methods,
{\em Acta Numer.},
vol. 33, pp. 487--632, 2024.

\bibitem{MaPr18}
P. L. Combettes and J. Eckstein,
Asynchronous block-iterative primal-dual decomposition methods
for monotone inclusions,
{\em Math. Program.},
vol. B168, pp. 645--672, 2018.

\bibitem{Siim25}
P. L. Combettes and J. I. Madariaga,
Almost-surely convergent randomly activated monotone operator
splitting methods,
{\em SIAM J. Imaging Sci.},
vol. 18, pp. 2177--2205, 2025.

\bibitem{Moco25}
P. L. Combettes and J. I. Madariaga,
A geometric framework for stochastic iterations,
arxiv, 2025. \\
\url{https://arxiv.org/pdf/2504.02761}

\bibitem{Siop15} 
P. L. Combettes and J.-C. Pesquet, 
Stochastic quasi-Fej\'er block-coordinate fixed point iterations
with random sweeping,
{\em SIAM J. Optim.}, 
vol. 25, pp. 1221--1248, 2015.

\bibitem{jat4}
P. L. Combettes and Z. C. Woodstock, 
Reconstruction of functions from prescribed proximal points,
{\em J. Approx. Theory},
vol. 268, art. 105606, 26 pp., 2021.

\bibitem{Dieu23}
A. Dieuleveut, G. Fort, E. Moulines, and H.-T. Wai, 
Stochastic approximation beyond gradient for signal processing 
and machine learning, 
{\em IEEE Trans. Signal Process.}, 
vol. 71, pp. 3117--3148, 2023.

\bibitem{Doob53}
J. L. Doob,
{\em Stochastic Processes}.
Wiley, New York, 1953.

\bibitem{Fuku02}
M. Fukushima, Z.-Q. Luo, and P. Tseng,
Smoothing functions for second-order-cone complementarity problems,
{\em SIAM J. Optim.},
vol. 12, pp. 436--460, 2002.

\bibitem{Haug68} 
Y. Haugazeau, 
{\em Sur les In\'equations Variationnelles et la Minimisation de 
Fonctionnelles Convexes}.
Th\`ese, Universit\'e de Paris, 1968. 

\bibitem{Herm19}
N. Hermer, D. R. Luke, and A. Sturm,
Random function iterations for consistent stochastic feasibility,
{\em Numer. Funct. Anal. Optim.},
vol. 40, pp. 386--420, 2019.

\bibitem{Hyto16}
T. Hyt\"onen, J. van Neerven, M. Veraar, and L. Weis,
{\em Analysis in Banach Spaces. Volume I: Martingales and 
Littlewood--Paley Theory}.
Springer, New York, 2016.

\bibitem{John24}
P. R. Johnstone, J. Eckstein, T. Flynn, and S. Yoo,
Stochastic projective splitting,
{\em Comput. Optim. Appl.},
vol. 87, pp. 397--437, 2024.

\bibitem{Kost23}
V. R. Kosti\'c and S. Salzo,
The method of randomized Bregman projections for stochastic
feasibility problems,
{\em Numer. Algorithms}, 
vol. 93, pp. 1269--1307, 2023.

\bibitem{Ledo91}
M. Ledoux and M. Talagrand,
{\em Probability in Banach Spaces: Isoperimetry and Processes}.
Springer, New York, 1991.

\bibitem{Luke26}
D. R. Luke, S. Schultze, and H. Grubm\"uller,
Stochastic algorithms for large-scale composite optimization: the
case of likelihood maximization for X-FEL imaging,
{\em Math. Program.},
to appear.

\bibitem{Marq25}
M. Marques Alves, J. E. Navarro Caballero, and R. T. Marcavillaca,
An inexact inertial projective splitting algorithm with strong
convergence,
{\em J. Optim. Theory Appl.},
vol. 207, art. 65, 2025.

\bibitem{Marq26}
M. Marques Alves, J. E. Navarro Caballero, M. Geremia, and 
R. T. Marcavillaca,
A strongly convergent inertial inexact proximal-point algorithm for
monotone inclusions with applications to variational inequalities,
{\em Optimization},
vol. 75, pp. 543--570, 2026.

\bibitem{Neco21}
I. Necoara and A. Nedi\'c, 
Minibatch stochastic subgradient-based projection algorithms for
feasibility problems with convex inequalities,
{\em Comput. Optim. Appl.},
vol. 80, pp. 121--152, 2021. 

\bibitem{Pier76}
G. Pierra,
\'Eclatement de contraintes en parall\`ele pour la minimisation
d'une forme quadratique,
{\em Lecture Notes in Comput. Sci.},
vol. 41. Springer, New York, 1976, pp. 200--218.

\bibitem{Solo00}
M. V. Solodov and B. F. Svaiter,
Forcing strong convergence of proximal point iterations in a
Hilbert space,
{\em Math. Program.},
vol. A87, pp. 189-202, 2000.

\end{thebibliography}
\end{document}